\pgfplotsset{compat=1.3}
\def\imod#1{\allowbreak\mkern10mu({\operator@font mod}\,\,#1)}
\theoremstyle{plain}
\newtheorem{theorem}{Theorem}[section]
\newtheorem{lemma}[theorem]{Lemma}
\newtheorem*{lemma*}{Lemma}
\newtheorem*{proposition*}{Proposition}
\newtheorem{corollary}[theorem]{Corollary}
\newtheorem*{observation*}{Observation}
\definecolor{gold}{rgb}{0.85,0.65,0}
\newcommand{\ip}[2]{\left\langle #1 , #2 \right\rangle}    %
\def\beq{\begin{equation}}
\def\eeq{\end{equation}}
\def\fnote#1{\footnote}
\def\R{{\mathbb{R}}}
\DeclareMathOperator{\Prox}{Prox}
\DeclareMathOperator{\argmin}{arg\,min}
\DeclareMathOperator{\dom}{dom}
\def\log{\mathop{{\rm log}}}
\begin{document}

\title{The Inexact Cyclic Block Proximal Gradient Method and Properties of Inexact Proximal Maps}
\author{Leandro Maia\thanks{Department of Industrial, Manufacturing and Systems Engineering,
Texas Tech University, USA, {\tt Leandro.Maia@ttu.edu}} \and David Huckleberry Gutman\thanks{Department of Industrial, Manufacturing and Systems Engineering,
Texas Tech University, USA, {\tt David.Gutman@ttu.edu}}
 \and Ryan Christopher Hughes\thanks{Addx Corporation, {\tt Ryan.Christopher.Hughes@gmail.com}}}

\maketitle

\begin{abstract}
    This paper expands the Cyclic Block Proximal Gradient method for block separable composite minimization by allowing for inexactly computed gradients and proximal maps. The resultant algorithm, the Inexact Cyclic Block Proximal Gradient (I-CBPG) method, shares the same convergence rate as its exactly computed analogue provided the allowable errors decrease sufficiently quickly or are pre-selected to be sufficiently small. We provide numerical experiments that showcase the practical computational advantage of I-CBPG for certain fixed tolerances of approximation error and for a dynamically decreasing error tolerance regime in particular. We establish a tight relationship between inexact proximal map evaluations and $\delta$-subgradients in our $\delta$-Second Prox Theorem. This theorem forms the foundation of our convergence analysis and enables us to show that inexact gradient computations and other notions of inexact proximal map computation can be subsumed within a single unifying framework. 
\end{abstract}

\section{Introduction}

We propose an Inexact Cyclic Block Proximal Gradient method (I-CBPG) for the block separable composite optimization problem
\begin{equation}\label{eq.problem}
F^*:=\min\left\{F(x):=f(x)+\sum_{i=1}^p \Psi_i\left(  U_i^T  x\right):x\in\R^n\right\}.
\end{equation}
We assume that $f:\R^n\to\R\cup\{\infty\}$ is smooth and convex, the matrices $U_i\in\R^{n\times n_i}$ are chosen such that $(U_1,\ldots,U_p)$ is an $n\times n$ permutation matrix, and each $\Psi_i: \R^{n_i} \to \R\cup\{\infty\}$ is proper, closed, and convex. Problem \eqref{eq.problem} naturally arises in data science whenever regularization is present. Matrix factorization \cite{Schmidt11}, LASSO \cite{Richtarik16}, group LASSO \cite{Qin13, Simon12}, matrix completion \cite{Wright09}, compressive sensing \cite{Donoho06, Wright09}, and neural network training \cite{Scardapane17} are but a few such problems.

The class of Block Proximal Gradient (BPG) methods readily exploits block separability to provide iterates that are cheap in terms of memory and computational costs, so they are popular for large-scale versions of problem \eqref{eq.problem} \cite{Beck13, FrongilloReid15, Leventhal10, Nesterov12,Richtarik16}. Often, BPG methods make considerable progress before a single full proximal gradient step can even completely execute. BPG methods principally differ in how they select the block $i$: greedily \cite{Richtarik12}, randomly \cite{Nesterov12}, or cyclically \cite{Beck13}. Cyclic BPG methods, the focus of our work, received their first convergence analysis in \cite{Beck13} which established the benchmark $\mathcal{O}(p/k)$ convergence rate when each $\Psi_i$ is the indicator of a closed and convex set. Later \cite{Shefi16} extended the analysis to account for $\Psi_i$ functions that are more generally proper, closed, and convex. Both \cite{Beck13} and \cite{Shefi16} assume exact computation of gradient and proximal maps and therefore avoid considering the effect of inexactness on their  analyses.

Since gradients and proximal maps are the main ingredients for a broad swath of first-order algorithms, the push to achieve lower iterate costs in large-scale settings has fueled research interest around their inexact computation (``inexactness''). Such inexactness provides a variety of benefits, but from a practical standpoint the most important is the ability to compute approximate updates quickly when a closed-form solution does not exist or would be prohibitively expensive from a computational perspective. The main focus of \cite{Schmidt11} is the convergence of the unaccelerated and accelerated proximal gradient schemes equipped with inexactly computed gradients and proximal maps. The ``inexact oracle" framework of \cite{Devolder14}, which is extended in \cite{Devolder13} and \cite{Dvurechensky16}, analyzes the convergence of common gradient based-methods when gradient or gradient-type mappings are computed inexactly. 

While some prior work has explored the effect of inexact computation on BPG methods, to our knowledge these studies have only concerned themselves with randomized schemes. A central inspiration for this work, \cite{Richtarik16}, considers how inexactly computed proximal maps and gradients affect the randomized BPG method. It further elaborates the benefits of incorporating pre-conditioning into prox map evaluations. Specifically, while pre-conditioning provides the benefit of making the problem of step-size selection trivial, this advantage comes at the cost of making closed-form evaluation of the pre-conditioned proximal map no longer possible in general. This lack of closed-form solution for the pre-conditioned proximal map drives the need for inexact proximal map evaluation. Although the randomized BPG is thoroughly studied in \cite{Richtarik16}, we are unaware of any works that attack the cyclic BPG variant. Our primary aim is to fill this apparent deficiency in the literature.

\subsection{Contributions and Outline}

We describe our key contributions along with the paper's layout below.
\begin{itemize}
	\item In Section \ref{section:inexact_proximal}, we analyze inexactly computed proximal maps that incorporate pre-conditioning in the sense of \cite{Richtarik16}. Our main theorem, the $\delta$-Second Prox Theorem (Theorem \ref{thm:second-prox}), generalizes what \cite{Beck17} calls the Second Prox Theorem \cite[Theorem 6.39]{Beck17} that supports the convergence proofs of a broad swath of proximal map-based algorithms. This Theorem's main feature is the tight relationship it expresses between inexact proximal map evaluations and $\delta$-subgradients of the underlying function.  This equivalence facilitates simple proofs of two important observations as corollaries. First, this paper's formulation of inexact proximal map computation subsumes another approach by Rockafellar in \cite{Rockafellar76} (Corollary \ref{cor:Rockafellar}). Second, instead of treating errors in proximal map and gradient computations separately it is feasible to regard them both more generally as inexactly computed proximal map evaluations (Corollary \ref{cor:gradient-inexact}).
	
	\item In Section \ref{section:inexact_block_method_composite}, we define and analyze our Inexact Cyclic Block Proximal Gradient (I-CBPG) method. To the best of our knowledge, this is the first coordinate descent-type scheme with deterministic guarantees that incorporates inexactly computed proximal maps and gradients. For $\ell_1$-norm regularized optimization problems, \cite{Hua12} analyzes a similar scheme. However, that analysis is intimately tied to properties of the $\ell_1$-norm. Our analysis provides two flavors of convergence results. First, we are able to show that, for a fixed tolerance of approximation error, the standard $\mathcal{O}(p/k)$ convergence rate for cyclic BPG method is preserved provided said error is pre-selected to be sufficiently small. Analogous results for randomized BPG methods with fixed errors are found in \cite{Richtarik16}. Second, we are able to show that said rate is preserved under the relatively loose condition that the error tolerance decreases at a $\mathcal{O}(1/k^2)$ rate. The decreasing error tolerance regime, in contrast to the fixed error tolerance regime, does not require any error tuning based on properties of the objective function, such as smoothness parameters, the initial optimality gap, or the initial iterate's distance from the set of optima. More importantly, as we see in our numerical experiments in Section \ref{sec:numerical}, the latitude that comes with looser approximations may yield significant speed advantages for early iterations in terms of CPU time. 
	\item In Section \ref{sec:numerical}, we provide numerical experiments on the well-known LASSO problem. These experiments demonstrate the power of our method and the particular benefits of dynamically decreasing error tolerance. 
\end{itemize}


%
%
\section{The Inexact Proximal Map and the $\delta$-Second Prox Theorem}
\label{section:inexact_proximal}

In this section, we introduce a framework for analyzing the effect of inexact computation on the \emph{pre-conditioned proximal map}
\begin{equation}\label{eq.prox:precond}
\Prox_{\Psi}^B(x,g):=\argmin_{y\in\R^n}\left\{\ip{g}{y}+\frac{1}{2}\|y-x\|_B^2+\Psi(y)\right\},
\end{equation}
where $x,g\in\R^n$, $\ip{\cdot}{\cdot}$ is an inner product on $\R^n$, $\|\cdot\|_B$ is the norm induced by the inner product $(x,y)\mapsto \ip{Bx}{y}$ with $B\in\R^{n\times n}$ positive definite, and $\Psi:\R^n\to\R\cup\{\infty\}$ is proper, closed, and convex. The dual norm of $\|\cdot\|_B$, which we denote $\|\cdot\|_B^*$, is easily shown to be $\|\cdot\|_{B^{-1}}$.

We must emphasize two crucial facts about the function \eqref{eq.prox:precond}. First, it is a generalization of the standard proximal map. Indeed, by setting $B=I_n$ and $g=0$ we recover 
\[
\Prox_{\Psi}^{I_n}(x,0)=\argmin_{y\in\R^n}\left\{\frac{1}{2}\|y-x\|^2+\Psi(y)\right\}=:\Prox_{\Psi}(x)
\]
Second, for common choices of $\Psi$ such as the $1$-norm, $\|\cdot\|_1$, the pre-conditioned proximal map does not have a closed-form expression unless $B$ is very simple, e.g. when $B=c\cdot I_n$ for some $c\in\R$. Outside of these special cases, one must usually recover $\Prox_{\Psi}^B(x,g)$ via numerical approximation. 

Instead of finding the unique, exact minimizer of $\Prox_{\Psi}^B$'s defining problem, though, our goal will be to find some $y\in\R^n$ that solves the problem up to a small predetermined error $\delta \in \R_+$. We are now prepared to formally define this section's centerpiece, the set-valued \emph{inexact pre-conditioned proximal map}, as the collection of all such approximate minima at $x$ with respect to $g,\delta,B,$ and $\Psi$:
\[
\Prox^B_\Psi(x,g,\delta):=\left\{y: \ip{g}{y}+\frac{1}{2}\|y-x\|_B^2+\Psi(y)\leq\min_{z\in\R^n}\left\{\ip{g}{z}+\frac{1}{2}\|z-x\|_B^2+\Psi(z)\right\}+\delta\right\}.
\]
One may also regard this as a generalized pre-conditioned proximal map, since we recover the exact pre-conditioned proximal map by setting $\delta=0$.

Our central result, the $\delta$-Second Prox Theorem (Theorem \ref{thm:second-prox}) is an inexact analogue of what \cite{Beck17} calls the  ``Second Prox Theorem", a key component in the bulk of convergence proofs for proximally-driven algorithms. Bregman-type generalizations of this theorem also support convergence proofs for Bregman proximal methods \cite{Lu17}. The $\delta$-Second Prox Theorem codifies the tight relationship between elements of $\Prox^B_\Psi$ and $\Psi$'s $\delta$-subdifferential. We say that $s\in\R^n$ is a $\delta$\emph{-subgradient of }$\Psi$\emph{ at }$x\in\dom(\Psi)$, with $\delta\geq 0$, if
\[
\Psi(y)\geq\Psi(x)+\ip{s}{y-x}-\delta\text{ for all }y\in\R^n.
\]
The $\delta$\emph{-subdifferential of }$\Psi$\emph{ at }$x$, $\partial \Psi_\delta(x)$, denotes the set of all $\delta$-subgradients of $\Psi$ at $x$. Rudiments of the relationship between $\Prox^B_\Psi$ and $\partial_\delta\Psi(x)$ appear in \cite{Schmidt11}, where it is shown that each $u\in\Prox^B_\Psi(x,g,\delta)$ associates to certain norm-bounded $s\in\partial_{\delta'}\Psi(x)$ for judicious selections of $\delta'$. Our $\delta$-Second Prox Theorem secures this result and its converse: each $\delta$-subgradient of $\Psi$ corresponds to a specially chosen inexact proximal map element. 

This equivalent characterization is particularly useful for verifying if $u\in\Prox_\Psi(x,g,\delta)$ as the reader will see in the last half of this section. Indeed, verification of this condition yields two important facts about the inexact pre-conditioned proximal map. First, it allows us to show that our approach to proximal map approximation includes another one defined in \cite{Rockafellar76} as a special case (Corollary \ref{cor:Rockafellar}). Second, it shows that we can regard approximation errors in gradient and proximal map computations simply as inexact proximal map evaluations (Corollary \ref{cor:gradient-inexact}).

Before stating and proving the $\delta$-Second Prox theorem, we recall two rules of the $\delta$-subdifferential calculus that are crucial to its proof, a sum rule and an optimality condition.

\begin{theorem}[$\delta$-Subdifferential Calculus]\label{thm:optimality:approx}
If $\Psi,\Psi':\R^n\to\R\cup\{\infty\}$ are proper, closed, and convex, and $\delta\geq 0$ then
\begin{enumerate}[(i)]
	\item (Optimality Condition)  It holds that $\Psi(x)-min_{y\in\R^n}\Psi(y)\leq\delta$ if and only if $0\in\partial_\delta\Psi(x)$ \cite[Theorem XI 1.1.5]{Hiriart13}.
	\item (Sum Rule) If $\text{ri}[\dom(\Psi)]\cap\text{ri}[\dom(\Psi')] \neq \emptyset$, where $\text{ri}(\cdot)$ denotes the relative interior of a convex set, then 
	\[
	\partial_\delta [\Psi+\Psi'](x)=\bigcup_{\delta'\in[0,\delta]}\left[\partial_{\delta'}\Psi(x)+\partial_{(\delta-\delta')}\Psi'(x)\right]
	\]
	for all $x\in\dom(\Psi)\cap\dom(\Psi')$, where the sum on the right is taken in the Minkowski sense \cite[Theorem XI 3.1.1]{Hiriart13}.
\end{enumerate}
\end{theorem}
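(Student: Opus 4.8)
The plan is to treat the two assertions separately, since the optimality condition is essentially a definitional unpacking while the sum rule carries all of the analytic weight. Both statements are classical (hence the citations to \cite{Hiriart13}), so my aim is to reconstruct self-contained arguments rather than to discover anything new.

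For part (i), I would simply specialize the definition of a $\delta$-subgradient at the point $s=0$. Unwinding $0\in\partial_\delta\Psi(x)$ means that $\Psi(y)\ge\Psi(x)+\langle 0,y-x\rangle-\delta=\Psi(x)-\delta$ must hold for every $y\in\R^n$. Taking the infimum over $y$ on the left-hand side, this is equivalent to $\min_{y\in\R^n}\Psi(y)\ge\Psi(x)-\delta$, i.e.\ to $\Psi(x)-\min_{y\in\R^n}\Psi(y)\le\delta$. Both implications fall out at once, and no constraint qualification or separation argument is needed.

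For part (ii), the ``$\supseteq$'' inclusion is routine: given $s\in\partial_{\delta'}\Psi(x)$ and $s'\in\partial_{(\delta-\delta')}\Psi'(x)$, I would add their two defining inequalities and observe that $s+s'$ satisfies the $\delta$-subgradient inequality for $\Psi+\Psi'$ at $x$. The real content is the reverse inclusion ``$\subseteq$'', and my plan is to route it through the Fenchel--Young characterization of $\delta$-subgradients, namely that $s\in\partial_\delta\Psi(x)$ if and only if $\Psi(x)+\Psi^*(s)\le\langle s,x\rangle+\delta$, where $\Psi^*$ denotes the convex conjugate. Under this lens, membership $s\in\partial_\delta[\Psi+\Psi'](x)$ becomes a single scalar inequality controlling the Fenchel--Young gap of $(\Psi+\Psi')^*$ at $s$.

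The key step, and the step I expect to be the main obstacle, is to invoke the exact infimal-convolution formula $(\Psi+\Psi')^*=\Psi^*\,\square\,(\Psi')^*$ with the infimum \emph{attained} at $s$; this is exactly what the hypothesis $\ri[\dom(\Psi)]\cap\ri[\dom(\Psi')]\neq\emptyset$ secures (the Moreau--Rockafellar constraint qualification). Given an attaining splitting $s=s_1+s_2$, I would distribute the total gap $\delta$ between the two summands by defining $\delta'$ to be the portion absorbed by $\Psi$; since each individual Fenchel--Young gap is nonnegative, this forces $\delta'\in[0,\delta]$, and the two pieces then certify $s_1\in\partial_{\delta'}\Psi(x)$ and $s_2\in\partial_{(\delta-\delta')}\Psi'(x)$. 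I anticipate that the delicate point is precisely the attainment of the infimal convolution at the prescribed $s$, which is where the relative-interior condition is indispensable; without it only the easy inclusion survives, and the union-over-$\delta'$ decomposition can genuinely fail.
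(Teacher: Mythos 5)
The paper does not actually prove this theorem---both parts are cited to Hiriart-Urruty and Lemar\'echal---so there is no in-paper argument to compare against; your reconstruction is correct and follows the standard route used in that reference (definitional unpacking at $s=0$ for the optimality condition, and the Fenchel--Young gap characterization of $\partial_\delta$ together with exactness of the infimal convolution $(\Psi+\Psi')^*=\Psi^*\,\square\,(\Psi')^*$ under the relative-interior qualification for the sum rule). The only detail worth making explicit when you write it up is the final bookkeeping: once the attaining splitting $s=s_1+s_2$ gives individual Fenchel--Young gaps $\delta_1,\delta_2\geq 0$ with $\delta_1+\delta_2\leq\delta$, take $\delta'=\delta_1$ and use monotonicity of $\varepsilon\mapsto\partial_\varepsilon\Psi'(x)$ to upgrade $s_2\in\partial_{\delta_2}\Psi'(x)$ to $s_2\in\partial_{(\delta-\delta')}\Psi'(x)$.
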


We now state and prove the primary result of this section, the $\delta$-Second Prox Theorem. All other results in this section, along with the majority of those in its sequel, hang on this theorem.

\begin{theorem}[$\delta$-Second Prox]\label{thm:second-prox}
Let $\Psi:\R^n\to\R\cup\{\infty\}$ be proper, closed, and convex, $\delta\geq 0$, and $B\succ 0$. The following are equivalent:
\begin{enumerate}[(i)]
	\item $u\in \Prox^B_\Psi(x,g,\delta)$
	\item There exists $\delta'\in[0,\delta]$ and $v\in\R^n$ such that $\|v\|_B^*\leq\sqrt{2(\delta-\delta')}$ and $v-g-B(u-x)\in\partial_{\delta'}\Psi (u)$.
	\item There exists $\delta'\in[0,\delta]$ and $v\in \R^n$ such that $\|v\|_B^*\leq\sqrt{2(\delta-\delta')}$ and 
\begin{equation}\label{eqn:approxsecond}
\ip{v-g-B(u-x)}{y-u}\leq \Psi(y)-\Psi(u)+\delta'
\end{equation}
for all $y\in\R^n$.
\end{enumerate}
\end{theorem}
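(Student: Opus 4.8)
The plan is to establish (i) $\Leftrightarrow$ (ii) and to dispatch (ii) $\Leftrightarrow$ (iii) by inspection. Indeed, setting $s := v - g - B(u-x)$, the membership $s \in \partial_{\delta'}\Psi(u)$ unwinds, directly from the definition of the $\delta'$-subdifferential, into precisely inequality \eqref{eqn:approxsecond} holding for every $y$; so (ii) and (iii) say the same thing and no work is needed there.

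To handle (i) $\Leftrightarrow$ (ii), I would first recast membership in the inexact proximal map as an approximate optimality condition. Abbreviate $\phi(z) := \ip{g}{z} + \frac{1}{2}\|z-x\|_B^2$ and $h := \phi + \Psi$, so that (i) reads $h(u) \leq \min_{z} h(z) + \delta$. By the Optimality Condition of Theorem \ref{thm:optimality:approx}(i), this is equivalent to $0 \in \partial_\delta h(u)$. Because $\phi$ is finite everywhere, $\ri[\dom \phi] = \R^n$ meets $\ri[\dom \Psi]$, so the Sum Rule of Theorem \ref{thm:optimality:approx}(ii) applies and converts $0 \in \partial_\delta h(u)$ into the existence of a nonnegative split $\delta = \delta'' + \delta'$ together with a vector $s$ satisfying $s \in \partial_{\delta''}\phi(u)$ and $-s \in \partial_{\delta'}\Psi(u)$.

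The crux is an exact description of $\partial_{\delta''}\phi(u)$. Since $\phi$ is a convex quadratic, its second-order expansion is exact, $\phi(y) - \phi(u) - \ip{\nabla \phi(u)}{y-u} = \frac{1}{2}\|y-u\|_B^2$ with $\nabla \phi(u) = g + B(u-x)$. Hence, writing $w := s - \nabla\phi(u)$ and letting $d := y - u$ range over $\R^n$, the condition $s \in \partial_{\delta''}\phi(u)$ is equivalent to $\frac{1}{2}\|d\|_B^2 - \ip{w}{d} + \delta'' \geq 0$ for all $d$. Minimizing the left side over $d$ (the minimizer being $d = B^{-1}w$) yields the value $\delta'' - \frac{1}{2}\|w\|_{B^{-1}}^2$, and since $\|\cdot\|_{B^{-1}} = \|\cdot\|_B^*$ the nonnegativity requirement is exactly $\|w\|_B^* \leq \sqrt{2\delta''}$. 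In words, $\partial_{\delta''}\phi(u)$ is the dual-norm ball of radius $\sqrt{2\delta''}$ centered at $g + B(u-x)$.

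Assembling the pieces, $s = g + B(u-x) + w$ with $\|w\|_B^* \leq \sqrt{2\delta''}$, while $-s \in \partial_{\delta'}\Psi(u)$. Putting $v := -w$ gives $-s = v - g - B(u-x)$, and rewriting $\delta'' = \delta - \delta'$ turns the bound into $\|v\|_B^* \leq \sqrt{2(\delta - \delta')}$, which is statement (ii). Every implication in this chain is reversible, so the equivalence follows. I expect the main obstacle to be the explicit evaluation of $\partial_{\delta''}\phi(u)$: one must recognize that the ``for all $d$'' inequality collapses, via a completion of squares, to a single scalar bound whose constant is governed by the dual norm $\|\cdot\|_{B^{-1}}$, and that this is exactly what produces the precise radius $\sqrt{2\delta''}$ appearing in the statement. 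A minor but necessary checkpoint is the qualification hypothesis of the Sum Rule, which holds here only because $\phi$ has full domain.
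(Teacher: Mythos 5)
Your proposal is correct and follows essentially the same route as the paper: both reduce (i) to $0\in\partial_\delta[\phi+\Psi](u)$ via the optimality condition of Theorem \ref{thm:optimality:approx}(i), split this with the sum rule of Theorem \ref{thm:optimality:approx}(ii), and identify $\partial_{\delta''}\phi(u)$ as the dual-norm ball of radius $\sqrt{2\delta''}$ centered at $g+B(u-x)$ (a computation the paper labels ``straightforward'' and you carry out explicitly by completing the square). The only cosmetic difference is that you group the linear term into $\phi$ while the paper peels off $g$ separately; the substance is identical.
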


\begin{proof}
The equivalence of (ii) and (iii) is immediate from the definition of $\partial_{\delta'}\Psi(u)$ so it suffices to show the equivalence of (i) and (ii). The crux of this equivalence's proof is the expression of the $\delta$-subgradient for the function $z\mapsto \ip{g}{z}+\frac{1}{2}\|z-x\|_B^2+\Psi(z)$ for $x$ fixed,
\begin{equation}\label{eqn:approx:subgrad:sum}
\partial_{\delta}\left[\ip{g}{\cdot}+\frac{1}{2}\|\cdot-x\|_B^2+\Psi(\cdot)\right](z)=g+\bigcup_{\delta'\in[0,\delta]}\left[\partial_{(\delta-\delta')} \frac{1}{2} \|\cdot-x\|_B^2(z)+\partial_{\delta'}\Psi(z)\right],
\end{equation}
which is a consequence of Theorem \ref{thm:optimality:approx}(ii). A straightforward computation produces
\[
\partial_{(\delta-\delta')} \frac{1}{2} \|\cdot-x\|_B^2(z)=B(z-x)+\left\{v:\|v\|_B^*\leq\sqrt{2(\delta-\delta')}\right\}
\]
where the sum is taken in the Minkowski sense. Thus, we write \eqref{eqn:approx:subgrad:sum} more explicitly as
\begin{multline*}
\partial_{\delta}\left[\ip{g}{\cdot}+\frac{1}{2}\|\cdot-x\|_B^2+\Psi(z)\right](z)=\\
g+B(z-x)+\bigcup_{\delta'\in[0,\delta]}\left[\partial_{\delta'}\Psi(z)+\left\{v:\|v\|_B^*\leq\sqrt{2(\delta-\delta')}\right\}\right]
\end{multline*}
In light of the $\delta$-subdifferential optimality condition (Theorem \ref{thm:optimality:approx}(i)), $u\in\Prox^B_\Psi(x,g,\delta)$ if and only if
\[
0\in g+B(u-x)+\bigcup_{\delta'\in[0,\delta]}\left[\partial_{\delta'}\Psi(u)+\left\{v:\|v\|_B^*\leq\sqrt{2(\delta-\delta')}\right\}\right].
\]
Noting that $\|v\|_B^* = \|-v\|_B^*$ and rearranging, this inclusion is clearly equivalent to 2).
\end{proof}

A notable product of the $\delta$-Second Prox theorem is that inexact proximal maps exhibit Lipschitz continuity in $x,g$ up to the level of inexactness, as the theorem below formalizes. It will play the same role in the convergence proof for I-CBPG as its exact analogue does in the convergence proof of the Cyclic Block Proximal Gradient method (compare Lemmata \ref{lemma:prox:block-decrease} and \ref{lemma:prox:convex:sufficient-decrease} with \cite[Lemma 11.11]{Beck17} and \cite[Lemma 11.16]{Beck17}).
\newpage
\begin{theorem}[Error Dependent Lipschitz Continuity of the $\delta$-Prox Map]\label{thm:non-expansive}
Let $\Psi:\R^n\to\R\cup\{\infty\}$ be proper, closed, and convex, $x,y,g,h\in\R^n$  $\delta,\epsilon\geq 0$, and $B\succ 0$.Then
\begin{equation}\label{eq:non-expansive}
\|u-w\|_B\leq\|g-h\|_B^*+\|y-x\|_B+\left(1+\frac{\sqrt{2}}{2}\right)\cdot\left(\sqrt{\delta}+\sqrt{\epsilon}\right)
\end{equation}
for all $u\in \Prox^B_\Psi(x,g,\delta)$ and $w\in \Prox^B_\Psi(y,h,\epsilon)$.
\end{theorem}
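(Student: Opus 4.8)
The plan is to derive the estimate from the $\delta$-Second Prox Theorem (Theorem~\ref{thm:second-prox}) together with an approximate monotonicity of the $\delta$-subdifferential. Since $u\in\Prox^B_\Psi(x,g,\delta)$, characterization (iii) produces $\delta'\in[0,\delta]$ and $v\in\R^n$ with $\|v\|_B^*\leq\sqrt{2(\delta-\delta')}$ and
\[
\ip{v-g-B(u-x)}{z-u}\leq\Psi(z)-\Psi(u)+\delta'\qquad\text{for all }z\in\R^n,
\]
and symmetrically $w\in\Prox^B_\Psi(y,h,\epsilon)$ produces $\epsilon'\in[0,\epsilon]$ and $v'\in\R^n$ with $\|v'\|_B^*\leq\sqrt{2(\epsilon-\epsilon')}$ and the analogous inequality. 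The first move is to test $u$'s inequality at $z=w$ and $w$'s inequality at $z=u$ and add them. The values $\Psi(u)$ and $\Psi(w)$ cancel, leaving an inequality in which $\Psi$ no longer appears --- the $\delta$-subgradient analogue of the monotonicity inequality $\ip{s_u-s_w}{u-w}\geq0$.

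Next I would expand the resulting inequality, extract the quadratic term using $\ip{B(u-w)}{u-w}=\|u-w\|_B^2$, and bound each surviving inner product by the generalized Cauchy--Schwarz inequality $\ip{a}{b}\leq\|a\|_B^*\|b\|_B$, invoking the identity $\|B(x-y)\|_B^*=\|x-y\|_B$ (valid because $\|\cdot\|_B^*=\|\cdot\|_{B^{-1}}$ and $B$ is symmetric). Writing $t:=\|u-w\|_B$, this collapses to a scalar quadratic inequality
\[
t^2\leq\big(\|g-h\|_B^*+\|y-x\|_B+\|v\|_B^*+\|v'\|_B^*\big)\,t+(\delta'+\epsilon'),
\]
and substituting the residual bounds $\|v\|_B^*\leq\sqrt{2(\delta-\delta')}$, $\|v'\|_B^*\leq\sqrt{2(\epsilon-\epsilon')}$ reduces everything to a one-variable estimate in $t$.

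The last step --- solving this quadratic and collecting the error into the stated constant --- is where I expect the real difficulty to lie. The residual terms are charged against the \emph{unused} budgets $\delta-\delta',\epsilon-\epsilon'$ while the additive term carries the \emph{used} budget $\delta'+\epsilon'$, and these trade off against one another as $\delta',\epsilon'$ range over $[0,\delta]\times[0,\epsilon]$. The temptation is to relax the root crudely as $t\leq C+\sqrt{\delta'+\epsilon'}$, but this is too lossy: the resulting error expression $\sqrt{2(\delta-\delta')}+\sqrt{2(\epsilon-\epsilon')}+\sqrt{\delta'+\epsilon'}$ can exceed $\big(1+\tfrac{\sqrt2}{2}\big)(\sqrt\delta+\sqrt\epsilon)$ at the worst split. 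The key will be to retain the genuine quadratic root $t\leq\tfrac12\big(C+\sqrt{C^2+4(\delta'+\epsilon')}\big)$, whose inner square root couples the residual magnitude to its own budget, and only then optimize over the admissible split $(\delta',\epsilon')$; this sharper bookkeeping brings the error comfortably within the advertised $\big(1+\tfrac{\sqrt2}{2}\big)(\sqrt\delta+\sqrt\epsilon)$ (indeed a careful accounting even permits the smaller constant $\sqrt2$). As an independent check on both the method and the constant, one can instead route through the exact proximal points $\bar u=\Prox^B_\Psi(x,g)$ and $\bar w=\Prox^B_\Psi(y,h)$, bounding $\|u-\bar u\|_B,\|w-\bar w\|_B$ via the strong convexity of the prox objective and $\|\bar u-\bar w\|_B$ via the exact ($\delta=\epsilon=0$) non-expansiveness of the pre-conditioned proximal map.
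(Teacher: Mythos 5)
Your proof is correct, and its first half --- invoking Theorem~\ref{thm:second-prox}(iii) at the cross points $z=w$, $z'=u$, adding, and extracting the scalar quadratic inequality in $t=\|u-w\|_B$ --- is exactly how the paper begins; the divergence is in how the quadratic is resolved. The paper never optimizes over the split $(\delta',\epsilon')$: it specializes the generic root bound to four degenerate pairs (inexact vs.\ exact at the same data, then exact vs.\ exact varying only $g$, then only $x$, then exact vs.\ inexact) and chains them by the triangle inequality through $\Prox^B_\Psi(x,g,0)$, $\Prox^B_\Psi(x,h,0)$, $\Prox^B_\Psi(y,h,0)$ --- i.e.\ precisely the route you relegate to an ``independent check'' at the end. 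Your primary route does close, and since you flagged the optimization as the point of real difficulty without carrying it out, here is the two-line accounting: writing $C_0=\|g-h\|_B^*+\|y-x\|_B$, $r=\|v_u\|_B^*+\|v_w\|_B^*$, $s=\delta'+\epsilon'$, the inequality $\sqrt{(C_0+r)^2+4s}\le C_0+\sqrt{r^2+4s}$ reduces matters to bounding $\tfrac12\bigl(r+\sqrt{r^2+4s}\bigr)$; with $a=\delta-\delta'$, $b=\epsilon-\epsilon'$ one has $r\le\sqrt{2a}+\sqrt{2b}\le\sqrt2\,(\sqrt\delta+\sqrt\epsilon)$ and $r^2+4s\le 2a+2b+4\sqrt{ab}+4(\delta+\epsilon)-4a-4b\le 4(\delta+\epsilon)$, whence $\tfrac12\bigl(r+\sqrt{r^2+4s}\bigr)\le\bigl(1+\tfrac{\sqrt2}{2}\bigr)(\sqrt\delta+\sqrt\epsilon)$ uniformly in the split. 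You are also right that the crude relaxation fails (at $\epsilon=0$, $\delta'=\delta/3$ it yields $\sqrt3\,\sqrt\delta>\bigl(1+\tfrac{\sqrt2}{2}\bigr)\sqrt\delta$), so insisting on the genuine root is essential. The trade-off between the two routes: the paper's triangle-inequality decomposition obtains the constant with no optimization at all, at the cost of a detour through exact proximal points and a longer chain of displays; your direct route is a single application of the quadratic formula plus the estimate above, and it exposes slack in the stated constant, consistent with your remark that $\sqrt2$ would suffice.
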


\begin{proof}
Invoking the $\delta$-Second Prox theorem, we may choose $v_u,v_w\in \R^n$, $\delta'\in[0,\delta]$, and $\epsilon'\in[0,\epsilon]$, such that $\|v_u\|_B^*\leq\sqrt{2(\delta-\delta')}$, $\|v_w\|_B^*\leq\sqrt{2(\epsilon-\epsilon')}$, and 
\begin{align*}
\ip{v_u-g-B(u-x)}{z-u}&\leq \Psi(z)-\Psi(u)+\delta'\\
\ip{v_w-h-B(w-y)}{z'-w}&\leq \Psi(z')-\Psi(w)+\epsilon'
\end{align*}
for all $z,z'\in\R^n$. If we add these two inequalities with $z=w$ and $z'=u$ then
\[
\ip{v_u-g-B(u-x)}{w-u}+\ip{v_w-h-B(w-y)}{u-w}\leq\delta'+\epsilon',
\]
which simplifies to the more informative
\[
\|u-w\|_B^2+\ip{(g-h)+(v_w-v_u)+B(y-x)}{u-w}-(\delta'+\epsilon')\leq 0.
\]
The standard Cauchy-Schwarz inequality, in conjunction with its more general form for arbitrary norms and their dual norms along with the triangle inequality, implies
\[
\|u-w\|_B^2-\left(\|(g-h)+(v_w-v_u)\|_B^*+\|y-x\|_B\right)\cdot\|u-w\|_B-(\delta'+\epsilon')\leq 0.
\]
The left-hand side of this inequality is quadratic in $\|u-w\|_B$. Thus we have the generic bound
\begin{multline}\label{eq:non-expansiveness-quadratic}
\|u-w\|_B\leq\frac{\|(g-h)+(v_w-v_u)\|_B^*+\|y-x\|_B}{2}\\+\frac{\sqrt{(\|(g-h)+(v_w-v_u)\|_B^*+\|y-x\|_B)^2+4(\delta'+\epsilon')}}{2}
\end{multline}

We now reason by cases to derive four specialized versions of \eqref{eq:non-expansiveness-quadratic} (equations \eqref{eq:non-expansive-1}, \eqref{eq:non-expansive-2},  \eqref{eq:non-expansive-3}, and \eqref{eq:non-expansive-4})  that we chain together via the triangle inequality in \eqref{eq:non-expansive-triangle} to secure our result \eqref{eq:non-expansive}. First, with $x$, $g$, $\delta$, and $u$  fixed as above, suppose that $y = x$, $h = g$, and $\epsilon = 0$. Then $w=\Prox_\Psi^B(x,g,0)$, $v_w=0$, and the generic bound  \eqref{eq:non-expansiveness-quadratic} reduces to  
\begin{align}
\left\|u-\Prox_\Psi^B(x,g,0)\right\|_B\leq\frac{\|v_u\|_B^*+\sqrt{\left(\|v_u\|_B^*\right)^2+4\delta'}}{2}&\leq\frac{\sqrt{2\delta}+\sqrt{2(\delta+\delta')}}{2}\notag\\
&\leq\left(1+\frac{\sqrt{2}}{2}\right)\cdot\sqrt{\delta}\label{eq:non-expansive-1}
\end{align}
with the latter two inequalities respectively resulting from $\|v_u\|_B^*<\sqrt{2(\delta-\delta')}$ and $0\leq\delta'\leq\delta$. Second, with $x,g,$ and $h$ similarly fixed, let $y = x$ and $\delta=\epsilon=0$. Obviously,  $v_u=v_w=0$, $u=\Prox_\Psi^B(x,g,0)$, and $w=\Prox_\Psi^B(x,h,0)$ so now \eqref{eq:non-expansiveness-quadratic} reduces to
\begin{equation}\label{eq:non-expansive-2}
\|\Prox_\Psi^B(x,g,0)-\Prox_\Psi^B(x,h,0)\|_B\leq\frac{\|g-h\|_B^*}{2}+\frac{\sqrt{(\|g-h\|_B^*)^2}}{2}=\|g-h\|_B^*.
\end{equation}
Third, with $x$, $y$, and $h$ fixed as above, suppose that $\delta=\epsilon=0$ and $g=h$. Then we obtain
\begin{equation}\label{eq:non-expansive-3}
\|\Prox_\Psi^B(x,h,0)-\Prox_\Psi^B(y,h,0)\|_B\leq\frac{\|y-x\|_B}{2}+\frac{\sqrt{(\|y-x\|_B)^2}}{2}=\|y-x\|_B
\end{equation}
since here $v_u=v_w=0$, $u=\Prox_\Psi^B(x,h,0)$, and $w=\Prox_\Psi^B(y,h,0)$. Finally, with $y$, $h$, and $\epsilon$ fixed at their original values, we may let $x = y$, $g = h$, and $\delta = 0$ to see that, by the same logic as  \eqref{eq:non-expansive-1},
\begin{equation}\label{eq:non-expansive-4}
\|\Prox_\Psi^B(y,h,0)-w\|_B \leq 
\frac{\|v_w\|_B^*+\sqrt{\left(\|v_w\|_B^*\right)^2+4\epsilon'}}{2}\leq \left(1+\frac{\sqrt{2}}{2}\right)\cdot\sqrt{\epsilon}
\end{equation}

Now, we may complete our proof. Furnished with \eqref{eq:non-expansive-1}, \eqref{eq:non-expansive-2}, \eqref{eq:non-expansive-3}, and  \eqref{eq:non-expansive-4}, we compute
\begin{multline}\label{eq:non-expansive-triangle}
\|u-w\|_B\leq\|u-\Prox_\Psi^B(x,g,0)\|_B\\+
\|\Prox_\Psi^B(x,g,0)-\Prox_\Psi^B(x,h,0)\|_B+\|\Prox_\Psi^B(x,h,0)-\Prox_\Psi^B(y,h,0)\|_B\\
+\|\Prox_\Psi^B(y,h,0)-w\|_B\\
\leq \left(1+\frac{\sqrt{2}}{2}\right)\cdot\sqrt{\delta}+\|g-h\|_B^*+\|y-x\|_B+\left(1+\frac{\sqrt{2}}{2}\right)\cdot\sqrt{\epsilon},
\end{multline}
for all $u\in \Prox^B_\Psi(x,g,\delta)$ and $w\in \Prox^B_\Psi(y,h,\epsilon)$, which is precisely \eqref{eq:non-expansive}.
\end{proof}

Let us now discuss how the inexact pre-conditioned proximal map includes other approaches to proximal map and gradient approximation. We show our framework subsumes at least two others. First, we turn to one proposed in \cite{Rockafellar76} by Rockafellar. Rockafellar approximates the (exact) proximal mapping $x\mapsto\Prox_\Psi(x):=\Prox^{I_n}_\Psi(x,0,0)$ by way of $y\in\R^n$ satisfying $\|r+y-x\|\leq\delta$ for some $r\in\partial\Psi(y)$ and $\delta >0$. The corollary below explains how Rockafellar's approximation method is a special case of our own.

\begin{corollary}\label{cor:Rockafellar}
Let $\Psi:\R^n\to\R\cup\{\infty\}$ be proper, closed, and convex, $\delta\geq 0$, and $B\succ 0$. For all $x,g\in\R^n$, $\Prox^B_\Psi\left(x,g,\delta\right)$ is equal to the set of $u\in\R^n$ satisfying
\[
\left\|r+g+B(u-x)\right\|_B^*\leq\sqrt{2(\delta-\delta')}
\]
for some $\delta'\in[0,\delta]$ and $r\in\partial_{\delta'}\Psi(u)$. In particular, for the set of approximate proximal map evaluations in the sense of \cite{Rockafellar76},
\[
\left\{u:\exists r\in\partial\Psi(u)\text{ s.t. }\|u+r-x\|\leq\sqrt{2\delta}\right\}\subseteq\Prox^{I_n}_\Psi\left(x,0,\delta\right)
\]
holds where $I_n$ denotes the $n\times n$ identity matrix.
\end{corollary}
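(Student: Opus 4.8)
The plan is to obtain both claims as direct specializations of the $\delta$-Second Prox Theorem (Theorem \ref{thm:second-prox}), exploiting the change of variables that converts the auxiliary vector $v$ appearing there into the subgradient $r$ appearing here.

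First I would establish the main equality. Fix $x,g\in\R^n$ and take an arbitrary $u\in\R^n$. By Theorem \ref{thm:second-prox}, membership $u\in\Prox^B_\Psi(x,g,\delta)$ is equivalent to the existence of $\delta'\in[0,\delta]$ and $v\in\R^n$ with $\|v\|_B^*\leq\sqrt{2(\delta-\delta')}$ and $v-g-B(u-x)\in\partial_{\delta'}\Psi(u)$. The key move is to set $r:=v-g-B(u-x)$, equivalently $v=r+g+B(u-x)$. This substitution is a bijection between the two descriptions: condition (ii) asserts exactly that there exist $\delta'\in[0,\delta]$ and $r\in\partial_{\delta'}\Psi(u)$ for which the corresponding $v=r+g+B(u-x)$ obeys $\|v\|_B^*=\|r+g+B(u-x)\|_B^*\leq\sqrt{2(\delta-\delta')}$. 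Rewriting this characterization in the variable $r$ is precisely the claimed description of $\Prox^B_\Psi(x,g,\delta)$, so the two sets coincide.

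Next I would derive the Rockafellar inclusion by specializing. Setting $B=I_n$ and $g=0$, I would first record that $\|\cdot\|_B=\|\cdot\|$ and $\|\cdot\|_B^*=\|\cdot\|_{B^{-1}}=\|\cdot\|$, so the quantity $\|r+g+B(u-x)\|_B^*$ collapses to $\|r+u-x\|$. Now take any $u$ in the Rockafellar set, so there is some $r\in\partial\Psi(u)=\partial_0\Psi(u)$ with $\|u+r-x\|\leq\sqrt{2\delta}$. Choosing $\delta'=0$ in the description just established, the pair $(\delta',r)$ witnesses membership $u\in\Prox^{I_n}_\Psi(x,0,\delta)$, since $r\in\partial_0\Psi(u)$ and $\|r+u-x\|\leq\sqrt{2\delta}=\sqrt{2(\delta-\delta')}$. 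This yields the asserted inclusion.

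There is no serious obstacle: the content is entirely carried by Theorem \ref{thm:second-prox}, and the argument amounts to a relabeling together with a choice of parameters. The one point worth flagging is why the second statement is merely an inclusion rather than an equality. The full characterization permits $r\in\partial_{\delta'}\Psi(u)$ for every $\delta'\in[0,\delta]$, whereas Rockafellar's condition forces the exact subdifferential, i.e.\ $\delta'=0$. Restricting to $\delta'=0$ discards the genuinely approximate subgradients lying in $\partial_{\delta'}\Psi(u)$ for $\delta'>0$, so the Rockafellar set sits strictly inside $\Prox^{I_n}_\Psi(x,0,\delta)$ in general; I would make sure the write-up emphasizes this asymmetry rather than overstating the relationship as an equality.
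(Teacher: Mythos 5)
Your proof is correct and takes essentially the same route as the paper: the set equality is the substitution $v=r+g+B(u-x)$ in Theorem \ref{thm:second-prox}, and the Rockafellar inclusion follows by specializing to $B=I_n$, $g=0$, $\delta'=0$. Your closing remark on why the second claim is only an inclusion is a sensible addition but does not change the argument.
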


\begin{proof}
The first part of the corollary directly follows from Theorem \ref{thm:second-prox}(iii) with $v=r+g+B(u-x)$. The second part follows immediately from the first.
\end{proof}

In the context of first-order, proximally-based algorithms, a number of researchers study approximation error in just one of either the proximal map or gradient \cite{Alexandre_2008,Villa13,Devolder14}, or, in cases where both types of errors are considered, their treatment is often handled separately \cite{Schmidt11}. For example, \cite{Schmidt11}, one of the authoritative works on the proximal gradient scheme with errors for the composite minimization problem $\min_{x\in\R^n} f(x)+\Psi(x)$ tenders the error dependent scheme
\begin{align*}
	x_k=&\Prox_\Psi^{I_n}\left(y_{k-1},t\left(\nabla f(y_{k-1})+e_k\right),\delta_k\right)\\
	y_k&=x_k+\beta_k(x_k-x_{k-1})
\end{align*}
where $\{e_k\}_{k\geq 1}$ records the gradient approximation error, $\{\beta_k\}\subseteq[0,\infty)$ dictates the momentum used to accelerate the algorithm, and $t>0$ is a stepsize parameter. 

Theorem \ref{thm:non-expansive} unveils an intriguing property of our $\delta$-Second Prox Theorem framework for analyzing the inexact proximal map: it is possible to unify the treatment of inexactly computed proximal maps and gradients by simply considering inexact proximal map computations. This observation is formalized via the next corollary's set inclusion.

\begin{corollary}\label{cor:gradient-inexact}
Let $\Psi:\R^n\to\R\cup\{\infty\}$ be proper, closed, and convex, and $B\succ 0$. For all $x,g,e\in\R^n$ and $\delta\geq 0$, we have the inclusion
\[
\Prox_\Psi^B(x,g+e,\delta)\subseteq\Prox_\Psi^B\left(x,g,\delta+\sqrt{2\delta}\|e\|_B^*+\frac{1}{2}\left(\|e\|_B^*\right)^2\right).
\]
\end{corollary}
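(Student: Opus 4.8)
The plan is to invoke the $\delta$-Second Prox Theorem (Theorem \ref{thm:second-prox}) in both directions, using the equivalence between membership in an inexact prox map and the existence of a suitably norm-bounded perturbation of a $\delta'$-subgradient. The observation driving the proof is that shifting the linear term from $g+e$ to $g$ merely transfers the error vector $e$ into the slack vector $v$ supplied by the theorem, and the cost of absorbing $e$ is exactly the quadratic overhead appearing in the statement.

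Concretely, first I would fix $u \in \Prox_\Psi^B(x, g+e, \delta)$ and apply Theorem \ref{thm:second-prox}(ii) to obtain $\delta' \in [0, \delta]$ and $v \in \R^n$ with $\|v\|_B^* \leq \sqrt{2(\delta - \delta')}$ and
\[
v - (g+e) - B(u - x) \in \partial_{\delta'}\Psi(u).
\]
Rewriting the left-hand side as $(v - e) - g - B(u - x)$ suggests the choice $w := v - e$ together with retention of the same accuracy level $\delta'$, so that $w - g - B(u-x) \in \partial_{\delta'}\Psi(u)$ holds verbatim. It then remains to show that $w$ is admissible for the enlarged tolerance $\tilde\delta := \delta + \sqrt{2\delta}\,\|e\|_B^* + \tfrac{1}{2}(\|e\|_B^*)^2$, i.e. that $\|w\|_B^* \leq \sqrt{2(\tilde\delta - \delta')}$, after which the implication (ii) $\Rightarrow$ (i) of Theorem \ref{thm:second-prox} immediately yields $u \in \Prox_\Psi^B(x, g, \tilde\delta)$.

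The admissibility check is the only computational step, and I would carry it out by the triangle inequality followed by squaring. From $\|w\|_B^* \leq \|v\|_B^* + \|e\|_B^* \leq \sqrt{2(\delta - \delta')} + \|e\|_B^*$, squaring both sides (both are nonnegative) shows the desired bound is equivalent to $\tilde\delta \geq \delta + \sqrt{2(\delta - \delta')}\,\|e\|_B^* + \tfrac{1}{2}(\|e\|_B^*)^2$. Since $\delta' \geq 0$ gives $\sqrt{2(\delta - \delta')} \leq \sqrt{2\delta}$, this is implied by the definition of $\tilde\delta$. Finally, $\delta' \leq \delta \leq \tilde\delta$ guarantees both that $\delta'$ is a legitimate accuracy parameter in $[0, \tilde\delta]$ and that the radical $\sqrt{2(\tilde\delta - \delta')}$ is well defined.

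There is no genuine obstacle beyond bookkeeping; the one point requiring a little care is the squaring step, where one must exploit $\delta' \geq 0$ to replace the $\delta$-dependent coefficient $\sqrt{2(\delta - \delta')}$ by its worst case $\sqrt{2\delta}$. This is precisely what produces the clean quadratic overhead $\sqrt{2\delta}\,\|e\|_B^* + \tfrac{1}{2}(\|e\|_B^*)^2$ in the statement, and it explains why the inclusion runs in the stated direction rather than as an equality.
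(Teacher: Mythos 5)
Your proposal is correct and follows essentially the same route as the paper's proof: invoke Theorem \ref{thm:second-prox}(ii), absorb the error $e$ into the slack vector by setting $w=v-e$ while keeping the same $\delta'$, and verify the norm bound via the triangle inequality, squaring, and the worst-case estimate $\sqrt{2(\delta-\delta')}\leq\sqrt{2\delta}$. The paper writes this verification in a more compressed single display, but the computation is identical.
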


\begin{proof}
This follows from Theorem \ref{thm:second-prox}(ii). Fix $u\in\Prox_\Psi^B(x,g+e,\delta)$. There exist $\delta'\in[0,\delta]$ and $v\in\R^n$ such that $\|v\|_B^*\leq\sqrt{2(\delta-\delta')}$ and $v-(g+e)-B(u-x)\in\partial_{\delta'}\Psi(u)$. Equivalently, $(v-e)-g-B(u-x)\in\partial_{\delta'}\Psi(u)$. Thus, $u\in\Prox_\Psi^B\left(x,g,\delta+\sqrt{2\delta}\|e\|_B^*+\frac{1}{2}\left(\|e\|_B^*\right)^2\right)$ since
\[
\|v-e\|_B^*\leq\sqrt{2\left[\left(\frac{1}{2}\left(\|v-e\|_B^*\right)^2+\delta'\right)-\delta'\right]}\leq\sqrt{2\left[\left(\delta+\sqrt{2\delta}\|e\|_B^*+\frac{1}{2}\left(\|e\|_B^*\right)^2\right)-\delta'\right]}.
\]
\end{proof}

%
%
\section{The Inexact Cyclic Block Proximal Gradient Method}
\label{section:inexact_block_method_composite}

In this section we introduce and analyze the Inexact Cyclic Block Proximal Gradient method (I-CBPG), a variant of the cyclic block proximal gradient method for \eqref{eq.problem} that allows for approximate evaluations of (pre-conditioned) proximal maps and gradients. Throughout, we will assume that the smooth and convex component $f$ of \eqref{eq.problem} satisfies the following \emph{block smoothness} condition: for each $i=1,\ldots,p$ there exists positive definite matrix $B_i\in\R^{n_i\times n_i}$ and $L_i>0$ such that
\begin{equation}\label{eq:smooth:block}
f(x+U_i t)\leq f(x)+\ip{\nabla_i f(x)}{t}+\frac{L_i}{2}\|t\|_{(i)}^2\text{ for all }x\in\R^n\text{ and }t\in\R^{n_i},
\end{equation}
where $\|\cdot\|_{(i)}$ denotes the norm on $\R^{n_i}$ induced by the inner product $(x,y)\mapsto\ip{B_i x}{y}$ and $\nabla_i f(x) = U_i^T \nabla f(x)$. We will also assume that $f$ satisfies a (pre-conditioned) \emph{smoothness} condition: there exists $L_f>0$ such that\begin{equation}\label{eq:smooth}
f(x+t)\leq f(x)+\ip{\nabla f(x)}{t}+\frac{L_f}{2}\|t\|_{B}^2\text{ for all }x,t\in\R^n,
\end{equation}
where $\|\cdot\|_B=\sqrt{\sum_{i=1}^p\|U_i^T \cdot\|_{(i)}^2}$. Finally, we will assume that $F$ is coercive, i.e. it has bounded sublevel sets. In particular, this implies that $R(x):=\sup_{y\in X^*}\|x-y\|<\infty$ where $X^* := \argmin_x f(x)$.

To define the algorithm, we assume for each $i=1,\ldots,p$, the set-valued map $\Prox^{B_i}_{\Psi_i/L_i}$ admits a selection function
\[
(x,\delta)\mapsto T_\delta^{(i)}(x)\in\Prox^{B_i}_{\frac{\Psi_i}{L_i}}\left(U_i^Tx,\frac{1}{L_i}\nabla_i f(x),\delta\right) \subseteq \R^{n_i} 
\] 
that ensures the monotonic decrease condition
\begin{equation}\label{eq.mondec}
f\left( x+U_i[T_\delta^{(i)}(x)-x_i] \right)\leq f(x)
\end{equation}
where $x_i \in \R^{n_i}$ is the $i$th block of $x$, that is, $x_i = U_i^T x$. It is now possible to introduce our I-CBPG scheme.\\

\begin{algorithm}[H]
\caption{Inexact Cyclic Block Proximal Gradient (I-CBPG) Method}\label{algo.main}
	\KwData{$x^0 \in \dom(f)$, $\delta_1\geq 0$}
	\For{k=0,1,2,\ldots}{
		$x^{k,0}=x^k$\;
		\For{i=1,\ldots,p}{
		\begin{equation}
			x^{k,i}=x^{k,i-1}+U_i[T_{\delta_{k+1}}^{(i)}(x^{k,i-1})-x^{k,i-1}_i];
		\end{equation}
		}
		$x^{k+1}:= x^{k,p}$\; 
		Choose $\delta_{k+2}\in[0,\delta_{k+1}]$\;		
		}	
\end{algorithm}
\noindent We call $\{\delta_k\}_{k\geq 1}$ the sequence of \emph{error tolerances}. Two types of error tolerance sequences will be considered: \emph{fixed} sequences where merely assume that $\delta_k=\delta\geq 0$ for $k\geq 1$ and \emph{dynamically decreasing} sequences that converge to $0$ at the sublinear rate $\mathcal{O}(1/k^2)$.
 
Our analysis of I-CBPG (Algorithm \ref{algo.main}) follows a standard three step outline for proving convergence of a first-order method purposed for convex minimization. First, we prove a sufficient decrease condition (Lemma \ref{lemma:prox:block-decrease}) that relates the suboptimality gap to the norm of the inter-iterate difference, $x^{k,i}-x^{k,i-1}$. Second, using the sufficient decrease condition, we derive a recurrence inequality (Lemma \ref{lemma:prox:convex:sufficient-decrease}) satisfied by the sequence of suboptimality gaps at each iterate. Third, we prove a technical lemma (Lemma \ref{lemma:recurrence-technical}) that describes the rate of convergence of a recurrence of the form found in Lemma \ref{lemma:prox:convex:sufficient-decrease}. Finally, we deduce our desired convergence rates as a consequence of the technical lemma and the suboptimality gap recurrence inequality. The convergence rates for fixed errors are summarized in Theorem \ref{theorem:convergence-bounded-error} and Corollary \ref{cor:convergence-bounded-error} while the convergence rates for sublinearly decreasing errors are summarized in Theorem \ref{theorem:convergence-decreasing-error} and Corollary \ref{cor:convergence-decreasing-error}.

We begin by presenting the sufficient decrease inequality.

\begin{lemma}[Sufficient Decrease Inequalities]\label{lemma:prox:block-decrease}
Let $\{x^k\}_{k\geq 0}$ and $\{\delta_k\}_{k\geq 1}$ denote the sequences of iterates and error tolerances generated by I-CBPG (Algorithm \ref{algo.main}). Then we have that 
\begin{enumerate}[(i)]
	\item For all $k\geq 0$ and $1\leq i\leq p$,
\begin{equation}\label{eqn:prox:block-decrease:main-inexact}
3L_i\delta_{k+1}+F(x^{k,i})-F(x^{k,i-1})\geq\frac{L_i}{4}\left\|x^{k,i}-x^{k,i-1}\right\|_{(i)}^2
\end{equation}
	\item For all $k\geq 0$,
	\begin{equation}
3L_{\min}p\delta_{k+1}+F(x^k)-F(x^{k+1})\geq\frac{L_{\min}}{4}\| x^k - x^{k+1}	\|_B^2
\end{equation}
\end{enumerate}
\end{lemma}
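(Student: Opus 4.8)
The plan is to establish the single-block inequality (i) directly from the $\delta$-Second Prox Theorem together with block smoothness, and then to deduce the aggregate inequality (ii) by summing (i) over one full cycle $i=1,\dots,p$. Fix $k$ and $i$ and abbreviate $z:=x^{k,i-1}$, $z_i:=U_i^Tz$, $\delta:=\delta_{k+1}$, and let $u:=T^{(i)}_{\delta}(z)$ denote the freshly computed $i$th block, so that $x^{k,i}-x^{k,i-1}=U_i(u-z_i)$ and $\|x^{k,i}-x^{k,i-1}\|_{(i)}=\|u-z_i\|_{(i)}$. Because the step perturbs only the $i$th block, the objective increment splits as $F(x^{k,i})-F(x^{k,i-1})=[f(x^{k,i})-f(z)]+[\Psi_i(u)-\Psi_i(z_i)]$, and I would bound the two brackets separately.

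For the smooth bracket I would invoke block smoothness \eqref{eq:smooth:block} to get $f(x^{k,i})-f(z)\le\ip{\nabla_i f(z)}{u-z_i}+\frac{L_i}{2}\|u-z_i\|_{(i)}^2$. For the nonsmooth bracket I would feed the membership $u\in\Prox^{B_i}_{\Psi_i/L_i}\left(z_i,\frac{1}{L_i}\nabla_i f(z),\delta\right)$ into Theorem \ref{thm:second-prox}(iii): there exist $\delta'\in[0,\delta]$ and $v$ with $\|v\|_{B_i}^*\le\sqrt{2(\delta-\delta')}$ for which \eqref{eqn:approxsecond} holds. Testing that inequality at $y=z_i$ and clearing the factor $1/L_i$ gives $\Psi_i(u)-\Psi_i(z_i)\le L_i\ip{v}{u-z_i}-\ip{\nabla_i f(z)}{u-z_i}-L_i\|u-z_i\|_{(i)}^2+L_i\delta'$, where I used $\ip{B_i(u-z_i)}{u-z_i}=\|u-z_i\|_{(i)}^2$.

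Adding the two estimates, the $\ip{\nabla_i f(z)}{u-z_i}$ terms cancel and the quadratic terms combine, leaving $F(x^{k,i})-F(x^{k,i-1})\le-\frac{L_i}{2}\|u-z_i\|_{(i)}^2+L_i\ip{v}{u-z_i}+L_i\delta'$. Applying the dual-norm Cauchy--Schwarz inequality $\ip{v}{u-z_i}\le\|v\|_{B_i}^*\|u-z_i\|_{(i)}\le\sqrt{2(\delta-\delta')}\,\|u-z_i\|_{(i)}$ turns the right-hand side into a quadratic in $d:=\|u-z_i\|_{(i)}$. To reach the sufficient decrease bound \eqref{eqn:prox:block-decrease:main-inexact}, i.e. $F(x^{k,i})-F(x^{k,i-1})\le 3L_i\delta-\frac{L_i}{4}d^2$, it then suffices to check that $-\frac{L_i}{2}d^2+L_i\sqrt{2(\delta-\delta')}\,d+L_i\delta'\le 3L_i\delta-\frac{L_i}{4}d^2$, which after dividing by $L_i$ is the scalar inequality $\frac14 d^2-\sqrt{2(\delta-\delta')}\,d+(3\delta-\delta')\ge0$. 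I would verify this by computing its discriminant $2(\delta-\delta')-(3\delta-\delta')=-\delta-\delta'\le0$; since the leading coefficient is positive, the quadratic is nonnegative for every $d$. (In fact a coefficient of $2L_i\delta$ already suffices, so the stated $3L_i\delta$ leaves a comfortable margin.) Isolating this scalar quadratic correctly---keeping the $1/L_i$ scaling of the prox and the particular $(\delta',v)$ supplied by Theorem \ref{thm:second-prox} straight---is the one place demanding care; everything after it is mechanical.

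Finally, for (ii) I would sum (i) over $i=1,\dots,p$. The objective terms telescope, since $\sum_{i=1}^p[F(x^{k,i-1})-F(x^{k,i})]=F(x^k)-F(x^{k+1})$. For the norm, the key observation is that a block step alters only its own coordinates, so $U_i^T(x^{k+1}-x^k)=u-z_i$ for each $i$, and hence, by the definition $\|\cdot\|_B^2=\sum_{i}\|U_i^T\cdot\|_{(i)}^2$, the exact identity $\|x^k-x^{k+1}\|_B^2=\sum_{i=1}^p\|x^{k,i}-x^{k,i-1}\|_{(i)}^2$ holds. Bounding $L_i\ge L_{\min}$ then collapses the summed right-hand side to $\frac{L_{\min}}{4}\|x^k-x^{k+1}\|_B^2$, while the error contributions collect into $3\delta_{k+1}\sum_{i=1}^p L_i$, which is absorbed into the error-tolerance term of (ii). The whole argument is thus essentially algebraic once the per-block estimate of the first three paragraphs is in hand.
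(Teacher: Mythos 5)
Your part (i) is correct and follows the paper's own route: block smoothness for $f$, Theorem \ref{thm:second-prox}(iii) tested at $y=x_i^{k,i-1}$ for $\Psi_i$, and an absorption of the cross term $L_i\ip{v}{x^{k,i}_i-x^{k,i-1}_i}$ into $\frac{L_i}{4}\|x^{k,i}_i-x^{k,i-1}_i\|_{(i)}^2+2L_i\delta_{k+1}$; your discriminant check is just a cleaner packaging of the paper's AM--GM step, and you are right that the constant $3$ has slack. (What you prove, $F(x^{k,i})-F(x^{k,i-1})\le 3L_i\delta_{k+1}-\frac{L_i}{4}\|x^{k,i}_i-x^{k,i-1}_i\|_{(i)}^2$, is exactly what the paper's proof establishes; the displayed statement of (i) writes the difference in the opposite order.)

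Part (ii), however, has a genuine gap in the constant. Summing (i) directly over $i$ makes the error contributions collect into $3\delta_{k+1}\sum_{i=1}^pL_i$, and since $\sum_{i=1}^pL_i\ge pL_{\min}$ (with equality only when all the $L_i$ coincide), this quantity cannot be ``absorbed into'' the stated $3L_{\min}p\,\delta_{k+1}$ --- the inequality runs the wrong way, so your argument only delivers (ii) with $3L_{\min}p$ replaced by $3\sum_iL_i$, i.e.\ at best $3pL_{\max}$. The paper avoids this by first dividing the per-block inequality by $L_i/4$, so that each block contributes an error of $12\delta_{k+1}$ independent of $L_i$; it then sums to obtain $\|x^k-x^{k+1}\|_B^2\le\sum_{i=1}^p\frac{4}{L_i}[F(x^{k,i-1})-F(x^{k,i})]+12p\delta_{k+1}$, replaces each $\frac{4}{L_i}$ by $\frac{4}{L_{\min}}$ (a step that implicitly uses nonnegativity of the per-block differences $F(x^{k,i-1})-F(x^{k,i})$), and finally multiplies through by $L_{\min}/4$. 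You should adopt that normalization if you want the constant as stated; your version is of the same order in $p$ and $\delta_{k+1}$ but is a strictly weaker inequality whenever the $L_i$ differ, and the constant $3L_{\min}p$ is the one carried forward into Lemma \ref{lemma:prox:convex:sufficient-decrease}.
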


\begin{proof} 
To streamline notation for the proofs of (i) and (ii), let us define the variables $x=x^{k,i-1}$,  $x^+=x^{k,i}$, and $\delta = \delta_{k+1}$. First, notice that the $i$-th block of $x^+$ is $x^+_i=T_\delta^{(i)}(x)$.\\

\noindent (i) By the blockwise smoothness property,
\[
f(x^+)\leq f(x)+\ip{\nabla_i f(x)}{x^+_i-x_i}+\frac{L_i}{2}\|x^+_i-x_i\|_{(i)}^2
\]
so
\begin{align}
F(x^+)&\leq F(x)+\ip{\nabla_i f(x)}{x^+_i-x_i}+\frac{L_i}{2}\left\|x^+_i-x_i\right\|_{(i)}^2+\Psi_i(x^+_i)-\Psi_i(x_i)\label{eqn:prox:block-decrease:smooth1}
\end{align}
We aim to refine the right-hand side of this inequality. To this end, the Theorem \ref{thm:second-prox}(iii) gives us 
\[
\ip{v-\frac{1}{L_i}\nabla_i f(x)-B_i(x^+_i-x_i)}{x_i-x^+_i}\leq \frac{\Psi_i}{L_i}(x_i)-\frac{\Psi_i}{L_i}\left(x^+_i\right)+\delta,
\]
for some $v\in\R^n$ such that $\|v\|_{(i)}^*\leq \sqrt{2\delta}$, since $x^+_i=T_\delta^{(i)}(x)$. We rearrange this inequality to bound $\ip{\nabla_i f(x)}{x^+_i-x_i}$ according to
%
%
\begin{align}
\ip{\nabla_i f(x)}{x^+_i-x_i}&\leq-L_i\|x^+_i-x_i\|_{(i)}^2+\Psi_i(x_i)-\Psi_i\left(x^+_i\right)+L_i\ip{v}{x^+_i-x_i}+L_i\delta\notag\\
& \leq  - L_i\|x^+_i-x_i\|_{(i)}^2+\Psi_i(x_i)-\Psi_i\left(x^+_i\right)\notag\\
&\hspace{10em}+\frac{L_i}{2}\Big( 2(\|v\|_{(i)}^*)^2 + \frac{1}{2}\|x^+_i-x_i\|_{(i)}^2 \Big)+L_i\delta\label{eq:CS-app-1}\\
& \leq  - \frac{3L_i}{4}\|x^+_i-x_i\|_{(i)}^2+\Psi_i(x_i)-\Psi_i\left(x^+_i\right)+3L_i\delta\label{eq:second-prox-grad-bound}
\end{align}
where on the third line we applied the AM-GM inequality to $\ip{v}{x^+_i-x_i}=\ip{\sqrt{2}v}{\frac{1}{\sqrt{2}}(x^+_i-x_i)}$. Finally, inserting \eqref{eq:second-prox-grad-bound}'s bound into \eqref{eqn:prox:block-decrease:smooth1}'s right-hand side, we settle on a rearranged \eqref{eqn:prox:block-decrease:main-inexact},
\[
F(x^+)\leq F(x)-\frac{L_i}{4}\left\|x^+_i-x_i\right\|_{(i)}^2+3L_i\delta.
\]
\noindent (ii) Rearrange the below chain of inequalities that follows from applying (i)
\begin{align*}
\| x^k - x^{k+1}	\|_B^2=\sum_{i=1}^{p}\|x^{k,i}-x^{k,i-1}\|_{(i)}^2&\leq\sum_{i=1}^{p}\left\{\frac{4}{L_i}[F(x^{k,i-1})-F(x^{k,i})]+12\delta\right\}\\
&\leq\frac{4}{L_{\min}}[F(x^k)-F(x^{k+1})]+12p\delta.
\end{align*}
\end{proof}
We now take the second step in our analysis: deriving the main recurrence inequality.

\newpage
\begin{lemma}\label{lemma:prox:convex:sufficient-decrease}
Let $\{x^k\}_{k\geq 0}$ and $\{\delta_k\}_{k\geq 1}$ denote the sequences of iterates and error tolerances generated by I-CBPG (Algorithm \ref{algo.main}). Then for all $k\geq 0$ the recurrence inequality
\[
\frac{L_{\min}}{2p(L_f+L_{\max})^2 R(x^0)^2}[F(x^{k+1})-F^*]^2\leq F(x^k)-F(x^{k+1})+ \mathcal{O}(\delta_{k+1})
\]
holds. Specifically,
\begin{multline}\label{eq:recur-main}
\frac{L_{\min}}{8p(L_f+L_{\max})^2 R(x^0)^2}[F(x^{k+1})-F^*]^2
\leq\\ F(x^k)-F(x^{k+1})+ L_{\min}\left[3p +    \frac{L^2_{\max}}{4}\left(  \frac{R(x^0)\sqrt{2} + \sqrt{p\delta_1}}{(L_f + L_{\max})R(x^0)} \right)^2\right]\delta_{k+1}
\end{multline}
\end{lemma}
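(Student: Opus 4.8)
The plan is to fuse the blockwise sufficient decrease of Lemma~\ref{lemma:prox:block-decrease}(ii) with an upper bound on the suboptimality gap $F(x^{k+1})-F^*$ expressed through an \emph{approximate subgradient of $F$ at $x^{k+1}$}. Write $s:=\|x^{k+1}-x^k\|_B$ and $\Delta:=F(x^{k+1})-F^*$. Lemma~\ref{lemma:prox:block-decrease}(ii) already furnishes $\tfrac{L_{\min}}{4}s^2\le F(x^k)-F(x^{k+1})+3L_{\min}p\,\delta_{k+1}$, which supplies the $F(x^k)-F(x^{k+1})$ term and the $3p$ contribution to the error on the right of \eqref{eq:recur-main}. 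It therefore remains to control $\Delta^2$ by $s^2$ (plus a $\delta_{k+1}$-error) and then substitute.

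To bound $\Delta$ I would manufacture a single $\delta$-subgradient of $F$ at $x^{k+1}$ out of the $p$ blockwise inexact optimality conditions. For each block $i$, apply Theorem~\ref{thm:second-prox}(iii) at $x=x^{k,i-1}$, $x^+=x^{k,i}$ with $\delta=\delta_{k+1}$ and $\delta'=0$; multiplying the resulting inequality by $L_i$ shows that
\[
s_i:=L_i v_i-\nabla_i f(x^{k,i-1})-L_i B_i\bigl(x^{k,i}_i-x^{k,i-1}_i\bigr),\qquad \|v_i\|_{(i)}^*\le\sqrt{2\delta_{k+1}},
\]
satisfies $\Psi_i(y)\ge\Psi_i(x^{k+1}_i)+\ip{s_i}{y-x^{k+1}_i}-L_i\delta_{k+1}$ for all $y$ (here $x^{k+1}_i=x^{k,i}_i$, since later sweeps leave block $i$ untouched). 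Summing these $p$ inequalities against $U_i^T y$ and adding the convexity inequality $f(y)\ge f(x^{k+1})+\ip{\nabla f(x^{k+1})}{y-x^{k+1}}$ yields, for $\xi:=\nabla f(x^{k+1})+\sum_i U_i s_i$,
\[
F(y)\ge F(x^{k+1})+\ip{\xi}{y-x^{k+1}}-\textstyle\sum_i L_i\delta_{k+1}\qquad\text{for all }y\in\R^n .
\]
Evaluating at $y=x^*\in X^*$ and applying Cauchy--Schwarz gives $\Delta\le\|\xi\|_B^*\,\|x^{k+1}-x^*\|_B+pL_{\max}\delta_{k+1}\le\|\xi\|_B^*R(x^0)+pL_{\max}\delta_{k+1}$, where $\|x^{k+1}-x^*\|_B\le R(x^0)$ comes from coercivity (iterates remain in a bounded region).

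The crux is bounding $\|\xi\|_B^*$, and this is where the cyclic structure bites. Using the blockwise decomposition $\|w\|_B^*=(\sum_i(\|U_i^Tw\|_{(i)}^*)^2)^{1/2}$ and the permutation identities $U_i^TU_j=\delta_{ij}I$, I split $\xi$ into three pieces: the \emph{stale-gradient} term $\nabla f(x^{k+1})-\sum_i U_i\nabla_i f(x^{k,i-1})$, whose $i$th block is $\nabla_i f(x^{k+1})-\nabla_i f(x^{k,i-1})$; the error term $\sum_i L_iU_iv_i$; and the proximal term $-\sum_iL_iU_iB_i(x^{k,i}_i-x^{k,i-1}_i)$. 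For the stale-gradient term, the global smoothness \eqref{eq:smooth} makes $\nabla f$ Lipschitz in the $\|\cdot\|_B\!\to\!\|\cdot\|_B^*$ sense, so each block is bounded by $L_f\|x^{k+1}-x^{k,i-1}\|_B\le L_f s$; summing the $p$ squared blocks gives $\sqrt p\,L_f s$. The error term contributes $L_{\max}\sqrt{2p\delta_{k+1}}$ via $\|v_i\|_{(i)}^*\le\sqrt{2\delta_{k+1}}$, and the proximal term contributes at most $L_{\max}s$ using $\|L_iB_iw\|_{(i)}^*=L_i\|w\|_{(i)}$ together with $\sum_i\|x^{k,i}_i-x^{k,i-1}_i\|_{(i)}^2=s^2$. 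Hence $\|\xi\|_B^*\le\sqrt p(L_f+L_{\max})s+L_{\max}\sqrt{2p\delta_{k+1}}$.

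Finally, combine. With $C:=\sqrt p(L_f+L_{\max})R(x^0)$ and $E:=R(x^0)L_{\max}\sqrt{2p\delta_{k+1}}+pL_{\max}\delta_{k+1}$ we get $\Delta\le Cs+E$, so $\Delta^2\le 2C^2s^2+2E^2$ and thus $\tfrac{\Delta^2}{2C^2}\le s^2+\tfrac{E^2}{C^2}$. Multiplying by $\tfrac{L_{\min}}{4}$ and inserting Lemma~\ref{lemma:prox:block-decrease}(ii)'s bound on $\tfrac{L_{\min}}{4}s^2$ produces exactly the left side of \eqref{eq:recur-main} on the left and $F(x^k)-F(x^{k+1})+3L_{\min}p\delta_{k+1}+\tfrac{L_{\min}E^2}{4C^2}$ on the right. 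The last step is cosmetic: since the tolerances are non-increasing ($\delta_{k+1}\le\delta_1$), bound $pL_{\max}\delta_{k+1}\le pL_{\max}\sqrt{\delta_1\delta_{k+1}}$ to factor $E\le\sqrt{p}\,L_{\max}\sqrt{\delta_{k+1}}\bigl(\sqrt2\,R(x^0)+\sqrt{p\delta_1}\bigr)$, whence $\tfrac{L_{\min}E^2}{4C^2}=\tfrac{L_{\min}L_{\max}^2}{4}\bigl(\tfrac{\sqrt2R(x^0)+\sqrt{p\delta_1}}{(L_f+L_{\max})R(x^0)}\bigr)^2\delta_{k+1}$, matching \eqref{eq:recur-main}. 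I expect the main obstacle to be the stale-gradient term: correctly forming the approximate subgradient despite gradients being read at the intermediate iterates $x^{k,i-1}$ rather than at $x^{k+1}$, and absorbing the mismatch through the global smoothness constant $L_f$ (the source of both the $\sqrt p$ and the $L_f$ in the rate). A secondary delicate point is justifying $\|x^{k+1}-x^*\|_B\le R(x^0)$ under inexact updates, which rests on the coercivity assumption and the monotone-decrease property \eqref{eq.mondec}.
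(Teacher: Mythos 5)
Your proposal is correct and follows essentially the same route as the paper's proof: blockwise application of the $\delta$-Second Prox Theorem, convexity of $f$, the stale-gradient mismatch absorbed via the global smoothness constant $L_f$, the $R(x^0)$ bound, squaring with $(a+b)^2\le 2a^2+2b^2$, and finally Lemma \ref{lemma:prox:block-decrease}(ii); your only deviation is packaging the $p$ blockwise inequalities into a single $pL_{\max}\delta_{k+1}$-subgradient $\xi$ of $F$ at $x^{k+1}$ and applying one global Cauchy--Schwarz in $\|\cdot\|_B$, which yields the identical intermediate bound and constants as the paper's blockwise Cauchy--Schwarz plus $\ell_1$-to-$\ell_2$ norm equivalence. (Minor nit: you cannot \emph{choose} $\delta'=0$ in Theorem \ref{thm:second-prox}(iii), but the relaxed inequalities you actually use---$\|v_i\|_{(i)}^*\le\sqrt{2\delta_{k+1}}$ and error $\le\delta_{k+1}$---are valid for whatever $\delta'$ the theorem provides, exactly as in the paper.)
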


\begin{proof}
Fix $k\geq 0$ and $i\in\{1,\ldots,p\}$. Invoking the $\delta$-Second Prox Theorem (Theorem \ref{thm:second-prox}) for $T^{(i)}_{\delta_{k+1}}(x^{k,i-1})$ with, there exists $v^{k,i}\in\R^{n_i}$ with $\|v^{k,i}\|_{(i)}^*\leq \sqrt{2\delta_{k+1}}$ such that
\[
\frac{\Psi_i}{L_i}(y)-\frac{\Psi_i}{L_i}(x^{k,i}_i)+\delta_{k+1} \geq\ip{v^{k,i}-\frac{1}{L_i}\nabla_i f(x^{k,i-1})-B_iT^{(i)}_{\delta_{k+1}}(x^{k,i-1})}{y-x^{k,i}_i},
\]
for any $y\in\R^{n_i}$. Setting $y=x^*_i$, recognizing that $x^{k,i}_i=x^{k+1}_i$ and $T^{(i)}_{\delta_{k+1}}(x^{k,i-1})=x_i^{k+1}-x_i^k$, and multiplying both sides by $L_i$
\[
\Psi_i(x^*_i)-\Psi_i(x^{k,i}_i)+L_i\delta_{k+1}\geq L_i\ip{v^{k,i}-\frac{1}{L_i}\nabla_i f(x^{k,i-1})+B_i(x_i^k-x_i^{k+1})}{x^*_i-x^{k+1}_i},
\]
Summing this last inequality over $i\in\{1,\ldots,p\}$ we see
\begin{equation*}
\Psi(x^*)-\Psi(x^{k+1})+\sum_{i=1}^p L_i \delta_{k+1}\geq \sum_{i=1}^p L_i\ip{v^{k,i}-\frac{1}{L_i}\nabla_i f(x^{k,i-1})+B_i(x_i^k-x_i^{k+1})}{x^*_i-x^{k+1}_i}
\end{equation*}
\newpage\noindent which we will eventually use in the rearranged form
\begin{multline}\label{eqn:prox:convex:total-nonsmooth}
\sum_{i=1}^p L_i\left(\delta_{k+1}+\ip{v^{k,i}-\frac{1}{L_i}\nabla_i f(x^{k,i-1})+B_i(x_i^k-x_i^{k+1})}{x^{k+1}_i-x^*_i}\right)\\
\geq \Psi(x^{k+1})-\Psi(x^*).
\end{multline}
The convexity of $f$ implies that 
\begin{align*}
F(x^{k+1})-F^*&=f(x^{k+1})-f(x^*)+\Psi(x^{k+1})-\Psi(x^*)\\
&\leq\ip{\nabla f(x^{k+1})}{x^{k+1}-x^*}+\Psi(x^{k+1})-\Psi(x^*)\\
&\leq\sum_{i=1}^p \ip{\nabla_i f(x^{k+1})}{x^{k+1}_i-x^*_i}+\Psi(x^{k+1})-\Psi(x^*)
\end{align*}
which we combine with \eqref{eqn:prox:convex:total-nonsmooth} to yield
\begin{multline}
F(x^{k+1})-F^*\leq\sum_{i=1}^p \ip{\nabla_i f(x^{k+1})}{x^{k+1}_i-x^*_i}\\
+\sum_{i=1}^p L_i\left(\delta_{k+1}+\ip{v^{k,i}-\frac{1}{L_i}\nabla_i f(x^{k,i-1})+B_i(x_i^k-x_i^{k+1})}{x^{k+1}_i-x^*_i}\right)\\
=\sum_{i=1}^p \left(L_i\delta_{k+1}+\ip{\nabla_i f(x^{k+1})-\nabla_i f(x^{k,i-1})+L_iB_i(x^k_i-x^{k+1}_i)+L_i v^{k,i}}{x^{k+1}_i-x^*_i}\right).
\end{multline}

We further compute
\begin{multline}\label{eq:sq-subopt-gap-1}
F(x^{k+1})-F^*\leq\sum_{i=1}^p \bigg[\|\nabla_i f(x^{k+1})-\nabla_i f(x^{k,i-1})\|^*_{(i)}+L_i\|x^k_i-x^{k+1}_i\|_{(i)}\\
+L_i\|v^{k,i}\|_{(i)}^*\bigg]\cdot\|x^{k+1}_i-x^*_i\|_{(i)}+pL_{\max}\delta_{k+1}\\
\leq \sum_{i=1}^p \left[L_f\|x^{k+1}-x^{k,i-1}\|_B+L_{\max}\|x^k_i-x^{k+1}_i\|_{(i)}+L_i\sqrt{2\delta_{k+1}}\right]\cdot\|x^{k+1}_i-x^*_i\|_{(i)}\\
\hspace{18em}+pL_{\max}\delta_{k+1}\\
=(L_f+L_{\max})\|x^k-x^{k+1}\|_B\cdot\sum_{i=1}^p \|x^{k+1}_i-x^*_i\|_{(i)}+L_{\max}\cdot\left(\sum_{i=1}^p \|x^{k+1}_i-x^*_i\|_{(i)}\right)\\
\cdot\sqrt{2\delta_{k+1}}+pL_{\max}\delta_{k+1}
\end{multline}
where we use the Cauchy-Schwarz, triangle, block smoothness \eqref{eq:smooth:block}, and preconditioned smoothness \eqref{eq:smooth} inequalities along with the norm bounds  on the $v^{k,i}$ terms on line 1.
The norm equivalence bound $\|\cdot\|_1\leq p^{1/2}\|\cdot\|_2$ on $\R^p$ along with the coercivity assumption implies
\[
\sum_{i=1}^p \|x^{k+1}_i-x^*_i\|_{(i)}\leq p^{1/2}\sqrt{\sum_{i=1}^p \|x^{k+1}_i-x^*_i\|_{(i)}^2}=p^{1/2}\|x^{k+1}-x^*\|_B\leq p^{1/2}R\left(x^0\right)
\]
so we may refine \eqref{eq:sq-subopt-gap-1} to
\[
F(x^{k+1})-F^*\leq p^{1/2}(L_f+L_{\max})R(x^0)\|x^k-x^{k+1}\|_B+p^{1/2}L_{\max}R(x^0)\sqrt{2\delta_{k+1}}+pL_{\max}\delta_{k+1}
\]
Now, observe that by squaring and applying the Cauchy-Schwarz inequality and monotonicity of the sequence $\{\delta_k\}_{k\geq 1}$, we get

\begin{eqnarray}
[F(x^{k+1})-F^*]^2 \leq \Big[ p^{1/2}(L_f+L_{\max})R(x^0)\|x^k-x^{k+1}\|_B+p^{1/2}L_{\max}R(x^0)\sqrt{2\delta_{k+1}}+pL_{\max}\delta_{k+1} \Big]^2\notag\\
\leq 2p(L_f+L_{\max})^2R(x^0)^2\|x^k-x^{k+1}\|_B^2  + 2\delta_{k+1} (\sqrt{2} p^{1/2}L_{\max}R(x^0) + pL_{\max}\sqrt{\delta_{k+1}}  )^2\label{eq:CS-app-2}\\
\leq 2p(L_f+L_{\max})^2R(x^0)^2\|x^k-x^{k+1}\|_B^2  + 2\delta_{k+1} (\sqrt{2} p^{1/2}L_{\max}R(x^0) + pL_{\max}\sqrt{\delta_1}  )^2\notag
\end{eqnarray}

At this point,  we multiply both sides of the inequality by $\frac{L_{\min}}{8p(L_f+L_{\max})^2 R(x^0)^2}$ and apply the bound of Lemma \ref{lemma:prox:block-decrease} in straightforward fashion to obtain
\begin{multline}
\frac{L_{\min}}{8p(L_f+L_{\max})^2  R(x^0)^2 }[F(x^{k+1})-F^*]^2 \leq \frac{L_{\min}}{4}\| x^k-x^{k+1}\|_B^2  \\
\hspace{12em}+\left( \frac{L_{\min}(\sqrt{2} p^{1/2}L_{\max}R(x^0) + pL_{\max}\sqrt{\delta_1}  )^2}{4p(L_f + L_{\max})^2R(x^0)^2} \right)\delta_{k+1} \\
\leq  F(x^k)-F(x^{k+1}) + 3L_{\min}p\delta_{k+1} +  \frac{L_{\min}L^2_{\max}}{4}\Bigg(   \frac{R(x^0)\sqrt{2} + \sqrt{p\delta_1}}{(L_f + L_{\max})R(x^0)} \Bigg)^2 \delta_{k+1}
\end{multline}
\end{proof}

As promised, we see that stating and proving our main convergence results hinges upon determining the convergence rate of a sequence satisfying a certain recurrence inequality. The following technical lemma, whose proof we defer to this paper's singular appendix, accomplishes this task.

\begin{lemma}\label{lemma:recurrence-technical}
If $\{A_\ell\}_{\ell\geq 0}$ and $\{\Delta_\ell\}_{\ell\geq 1}$ are non-negative, non-increasing sequences of real numbers satisfying the recurrence inequality
\begin{equation}\label{eq:recur-ineq}
\frac{1}{\gamma}A_{\ell+1}^2\leq A_\ell-A_{\ell+1}+\Delta_{\ell+1}.
\end{equation}
for some $\gamma\geq 1$ then the following hold:
\begin{enumerate}[(i)]
\item If $\{\Delta_{\ell}\}_{\ell\geq 1}$ is a constant sequence such that $\Delta_\ell = \Delta\geq 0$ for all $\ell\geq 1$,  then for $u = \sqrt{\Delta\gamma}$, we have that 
\[
A_k\leq\max\left\{\frac{4\gamma (A_0 - u)}{(k-1)(A_0+3u)} +u ,\left(\frac{1}{2}\right)^{(k-1)/2}A_0\right\}
\]
for $k\geq 2$.
	\item If $\{\Delta_\ell\}_{\ell\geq 1}$ shrinks at the sublinear rate $\mathcal{O}(1/k^2)$, i.e. there exists $D>0$ such that $\Delta_\ell\leq D/\ell^2$ for $\ell\geq 1$, then
	\[
A_k\leq \max\left\{\frac{16\gamma}{k-3},\frac{8\sqrt{D\gamma}}{k-3},\left(\frac{1}{2}\right)^{(k-1)/2}A_0\right\}
\]
for $k\geq 4$.

\end{enumerate}
\end{lemma}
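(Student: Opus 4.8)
The plan is to handle the two regimes with a common skeleton and two different devices for absorbing the error $\Delta_{\ell+1}$. Throughout I rewrite the hypothesis as $\tfrac1\gamma A_{\ell+1}^2 + A_{\ell+1}\le A_\ell + \Delta_{\ell+1}$ and exploit that both sequences are non-negative and non-increasing. The mechanism that produces the geometric term $\left(\tfrac12\right)^{(k-1)/2}A_0$ in both parts is a dichotomy on the steps $\ell=0,\dots,k-1$: call $\ell$ a \emph{halving} step if $A_{\ell+1}\le\tfrac12 A_\ell$ and a \emph{slow} step otherwise. If at least half of the steps are halving, then iterating $A_{\ell+1}\le\tfrac12 A_\ell$ (and using monotonicity on the remaining steps) gives $A_k\le\left(\tfrac12\right)^{(k-1)/2}A_0$ directly; so in every case one may assume that more than half the steps, at least $(k-1)/2$ of them, are slow.

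For part (i) the key observation is that $u=\sqrt{\Delta\gamma}$ is the equilibrium of the recurrence, since $\tfrac1\gamma u^2=\Delta$. I would first dispatch the trivial case in which $A_\ell\le u$ for some $\ell$: monotonicity then forces $A_k\le u$, which is dominated by the claimed bound (whose rate term always exceeds $u$). Otherwise $B_\ell:=A_\ell-u>0$, and the substitution $A_\ell=B_\ell+u$ makes the constant error cancel \emph{exactly}, leaving $\tfrac1\gamma B_{\ell+1}(B_{\ell+1}+2u)\le B_\ell-B_{\ell+1}$ with $\{B_\ell\}$ non-negative and non-increasing. Dividing this by $B_\ell B_{\ell+1}$ and using that a slow step means $A_{\ell+1}>\tfrac12 A_\ell$, i.e. $B_\ell<2B_{\ell+1}+u$, furnishes a telescoping increment $\tfrac1{B_{\ell+1}}-\tfrac1{B_\ell}\ge\tfrac1{2\gamma}$ on every slow step; the extra $2u$ in the numerator is precisely what keeps this bound clean despite the shift. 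Summing over the $\ge(k-1)/2$ slow steps and inverting yields the $\mathcal{O}(\gamma/k)$ rate, after which I add $u$ back. The sharper denominator $A_0+3u$ rather than the crude $A_0$ is recovered by retaining the full strength of the numerator $B_{\ell+1}+2u$ instead of merely $\tfrac12$; tracking that improvement is routine but bookkeeping-heavy.

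For part (ii) the equilibrium shift is unavailable because $\Delta_\ell$ is no longer constant, so I would instead prove the polynomial bound directly by induction. Set $R_k:=C/(k-3)$ with $C:=\max\{16\gamma,\,8\sqrt{D\gamma}\}$. The heart of the argument is a clean forward step: assuming $A_\ell\le R_\ell$, suppose for contradiction that $A_{\ell+1}>R_{\ell+1}$; substituting into $\tfrac1\gamma A_{\ell+1}^2+A_{\ell+1}\le R_\ell+\Delta_{\ell+1}$ reduces everything to the purely numerical inequality $\tfrac1\gamma R_{\ell+1}^2\ge R_\ell-R_{\ell+1}+\Delta_{\ell+1}$ for $\ell\ge4$. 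The two constituents of $C$ are calibrated so that (splitting $\tfrac1\gamma R_{\ell+1}^2$ in half) the $16\gamma$ piece dominates the telescoped gap $R_\ell-R_{\ell+1}=\tfrac{C}{(\ell-3)(\ell-2)}$, while the $8\sqrt{D\gamma}$ piece dominates $\Delta_{\ell+1}\le D/(\ell+1)^2$. This shows that once the sequence enters the tube $\{A_\ell\le R_\ell\}$ it never leaves. To reach the tube the geometric term re-enters: while $A_\ell$ exceeds a fixed $\mathcal{O}(\gamma)$ threshold the recurrence forces $A_{\ell+1}\le\tfrac12 A_\ell$, so the sequence decreases at least geometrically---hence is bounded by $\left(\tfrac12\right)^{(k-1)/2}A_0$---until it drops into the tube, and the fact that $\left(\tfrac12\right)^{(k-1)/2}A_0<R_k$ for all large $k$ guarantees entry before the geometric term stops dominating.

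The step I expect to be the main obstacle is controlling the accumulated error in part (ii). The naive route---telescoping $1/A_\ell$ and bounding $\sum\Delta_{\ell+1}\le D\sum\ell^{-2}<\infty$---delivers only an $\mathcal{O}(1/\sqrt k)$ error contribution, which is too weak for the stated $\mathcal{O}(1/k)$ rate, precisely because the relevant errors sit near step $k$ and are weighted against an $A_\ell$ that is itself shrinking like $1/\ell$. Replacing that route by the self-consistent induction above sidesteps the accumulation entirely, but it forces a delicate gluing between the pre-tube geometric phase and the in-tube inductive phase: one must verify that the threshold governing guaranteed halving, the index at which $\left(\tfrac12\right)^{(k-1)/2}A_0$ falls below $R_k$, and the base index $k=4$ of the induction are mutually consistent so that no step falls through the cracks. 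Pinning down those thresholds, together with the two numerical inequalities, is the technical core; everything else is standard manipulation.
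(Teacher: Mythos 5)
Your part (i) follows essentially the paper's own route: the halving/slow dichotomy on the steps, the equilibrium shift $\tilde A_\ell=A_\ell-u$ that cancels the constant error, and telescoping of $1/\tilde A_\ell$ over the slow steps. That part is fine. Part (ii), however, replaces the paper's argument with a forward-invariant ``tube'' induction, and there the proposal has a genuine gap at exactly the point you flag as delicate: \emph{entry} into the tube. Your halving guarantee only holds above a \emph{constant} threshold of order $\gamma$ (indeed $A_{\ell+1}>\tfrac12 A_\ell$ forces $A_\ell<\gamma+\sqrt{\gamma^2+4\gamma\Delta_{\ell+1}}\le 2\gamma+2\sqrt{\gamma\Delta_{\ell+1}}$), whereas the tube radius $R_\ell=C/(\ell-3)$ shrinks like $1/\ell$; already for $\ell\gtrsim 11$ one has $R_\ell<2\gamma$. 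So ``dropping below the halving threshold'' does not mean ``entering the tube'': there is a widening band $R_\ell<A_\ell\lesssim 2\gamma$ in which neither your halving lemma nor your invariance lemma applies. Concretely, if $A_0$ is enormous the sequence can halve down to a value just under $2\gamma$ at a large index $k_1$, at which point it sits far outside the tube, halving is no longer forced, and the geometric term $\left(\tfrac12\right)^{(k-1)/2}A_0$ (which bounds $A_k$ only via monotonicity, hence only for $k\le 2k_1+1$) eventually stops covering it. The sentence ``$\left(\tfrac12\right)^{(k-1)/2}A_0<R_k$ for all large $k$ guarantees entry'' is not a proof of entry, because that geometric expression ceases to be an upper bound on $A_k$ once halving stops.

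Closing this gap requires real additional work: one must show that in a slow step outside the tube the reciprocal $1/A_\ell$ still increases by $\Omega(1/\gamma)$, and that requires controlling the term $\Delta_{\ell+1}/(A_\ell A_{\ell+1})$ --- which is precisely the split the paper makes. The paper avoids induction and entry altogether by running, for each fixed $k$, a counting argument over the steps $0,\dots,k-1$: either at least half are halving (geometric bound), or at least half are slow, and of those at least $\lfloor k/4\rfloor$ have $\Delta_{\ell+1}/(A_\ell A_{\ell+1})<\tfrac{1}{4\gamma}$ (telescoping gives $16\gamma/(k-3)$) or at least $\lfloor k/4\rfloor$ have the reverse inequality, in which case evaluating the recurrence at the \emph{largest} such index $\ell^*\ge k/4-1$ gives $A_k^2\le A_{\ell^*}A_{\ell^*+1}\le 4\gamma\Delta_{\ell^*+1}\le 4\gamma D/(k/4)^2$ directly. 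Your forward-invariance computation is correct and would be a fine substitute for the in-tube phase, but as written the proposal does not prove part (ii); you should either adopt the per-$k$ counting or supply a quantitative entry lemma.
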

%
%

Below we present convergence rates for I-CBPG and thus complete our theoretical developments. Our first two results, Theorem \ref{theorem:convergence-bounded-error} and Corollary \ref{cor:convergence-bounded-error}, cover merely fixed errors. A reader familiar with the analyses of cyclic BPG methods in \cite{Beck13, Beck17} will notice that the constant $\gamma$ in Theorem \ref{theorem:convergence-bounded-error} differs by a factor of $4$ from that in \cite[Theorem 11.18]{Beck17}. This constant, and thus the rate in the exact computation setting, is recoverable from our analysis with minor modification. Namely, by replacing the Cauchy-Schwarz derived bounds in equations \eqref{eq:CS-app-1} and \eqref{eq:CS-app-2}, we can recover said constant. The cost, however, is that the dependence in Lemma \ref{lemma:prox:convex:sufficient-decrease} on $\{\delta_k\}_{k\geq 1}$ deteriorates from $\mathcal{O}(\delta_k)$ to $\mathcal{O}(\delta_k^{1/2})$.

\begin{theorem}[Convergence of I-CBPG: Fixed Error Case]
\label{theorem:convergence-bounded-error}
Let $\{x^k\}_{k\geq 0}$ and $\{\delta_k\}_{k\geq 1}$ denote the sequences of iterates and error tolerances generated by I-CBPG (Algorithm \ref{algo.main}). If the error tolerance sequence is fixed ($\delta_\ell=\delta\geq 0$ for $\ell\geq 1$) then for any $k \ge 2$,
\begin{equation}\label{eqn:convergence-bounded-error}
F(x^k) - F^* \le \max\left\{  \left(\frac{1}{2} \right)^{(k-1)/2} \left( F(x^0)  - F^*  \right ) 		, \;\; \frac{4\gamma \left( F(x^0)  - F^*-u \right ) }{(k-1)(F(x^0)  - F^*  + 3u)} + u	\right\} ,
\end{equation}
where
\[
\gamma= \frac{8p(L_f + L_{\max})^2R(x^0)^2}{L_{\min}} , \quad u  = \sqrt{  L_{\min}\left[3p +    \frac{L^2_{\max}}{4}\left(  \frac{R(x^0)\sqrt{2} + \sqrt{p\delta}}{(L_f + L_{\max})R(x^0)} \right)^2\right]\delta \gamma}
\]
\end{theorem}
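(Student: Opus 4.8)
The plan is to read off the theorem as a direct specialization of the recurrence estimate in Lemma~\ref{lemma:prox:convex:sufficient-decrease} together with the technical recurrence bound in Lemma~\ref{lemma:recurrence-technical}(i). First I would make the identifications
\[
A_\ell := F(x^\ell) - F^*, \qquad \gamma := \frac{8p(L_f+L_{\max})^2 R(x^0)^2}{L_{\min}},
\]
together with
\[
\Delta := L_{\min}\left[3p + \frac{L_{\max}^2}{4}\left(\frac{R(x^0)\sqrt{2} + \sqrt{p\delta}}{(L_f + L_{\max})R(x^0)}\right)^2\right]\delta .
\]
Under the fixed-error hypothesis $\delta_\ell \equiv \delta$ we have $\delta_1 = \delta$ and $\Delta_\ell \equiv \Delta$, so that, since $A_k - A_{k+1} = F(x^k) - F(x^{k+1})$ and $[F(x^{k+1})-F^*]^2 = A_{k+1}^2$, inequality \eqref{eq:recur-main} of Lemma~\ref{lemma:prox:convex:sufficient-decrease} becomes exactly $\frac{1}{\gamma}A_{\ell+1}^2 \le A_\ell - A_{\ell+1} + \Delta_{\ell+1}$, which is the hypothesis \eqref{eq:recur-ineq} of Lemma~\ref{lemma:recurrence-technical} for this $\gamma$ and the constant sequence $\Delta$.

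I would then verify the remaining hypotheses of Lemma~\ref{lemma:recurrence-technical}(i). Non-negativity of $\{A_\ell\}$ is immediate since $F^* = \min F$; the sequence $\{\Delta_\ell\}$ is constant, hence non-increasing; and one confirms $\gamma \ge 1$ in the non-degenerate regime $R(x^0) > 0$. Granting these, the lemma returns, with $u = \sqrt{\Delta\gamma}$ and $A_0 = F(x^0) - F^*$,
\[
A_k \le \max\left\{\frac{4\gamma(A_0 - u)}{(k-1)(A_0 + 3u)} + u,\ \left(\frac{1}{2}\right)^{(k-1)/2}A_0\right\}
\]
for $k \ge 2$. Substituting the closed forms of $\gamma$, $A_0$, and $u$ reproduces \eqref{eqn:convergence-bounded-error} verbatim; in particular $u = \sqrt{\Delta\gamma}$ coincides with the displayed $u$ precisely because $\delta_1 = \delta$ in the fixed-error setting, so no separate argument is needed to match the two expressions.

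The step that demands genuine care, and the one I expect to be the main obstacle, is the monotonicity hypothesis of Lemma~\ref{lemma:recurrence-technical}: that $\{A_\ell\} = \{F(x^k) - F^*\}$ is non-increasing. This is precisely what the monotonic decrease condition \eqref{eq.mondec} on the selection maps $T_\delta^{(i)}$ is meant to furnish. The delicacy is that \eqref{eq.mondec} directly controls only the smooth part $f$ across each block update, whereas $\{F(x^k)\}$ also carries the nonsmooth terms $\Psi_i$; reconciling the possible increase of the $\Psi_i$ terms against the guaranteed per-block decrease of $f$, so as to obtain $F(x^{k+1}) \le F(x^k)$, is the part of the argument I would treat most carefully, combining \eqref{eq.mondec} with the block-wise approximate-prox inequality supplied by Theorem~\ref{thm:second-prox}(iii). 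Once this objective descent is secured, the remainder reduces to matching constants and invoking Lemmata~\ref{lemma:prox:convex:sufficient-decrease} and \ref{lemma:recurrence-technical}.
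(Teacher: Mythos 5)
Your proposal matches the paper's proof exactly: the paper simply invokes Lemma~\ref{lemma:recurrence-technical}(i) with $A_k = F(x^k)-F^*$ and the same $\gamma$, $u$, and $\Delta$, so the identification-and-substitution argument you describe is the entire published proof. The monotonicity of $\{F(x^k)-F^*\}$ that you rightly flag as the delicate hypothesis (since the monotone decrease condition \eqref{eq.mondec} controls only $f$, while an inexact prox step can increase the $\Psi_i$ part by $O(\delta)$) is not addressed in the paper's proof either --- it is taken for granted there --- so your treatment is, if anything, more careful than the published one.
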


\begin{proof}
The result is immediate upon invoking the technical recurrence lemma (Lemma \ref{lemma:recurrence-technical}(i)) with $A_k = F(x^k) - F^*$, $\gamma$ and $u$ as in the statement of the theorem, and 
\[
\Delta =   L_{\min}\left[3p +    \frac{L^2_{\max}}{4}\left(  \frac{R(x^0)\sqrt{2} + \sqrt{p\delta}}{(L_f + L_{\max})R(x^0)} \right)^2\right]\delta 
\]
\end{proof}

\begin{corollary}[Convergence of I-CBPG: Fixed Error Case (Restated)]\label{cor:convergence-bounded-error}
Under the same assumptions and definitions of $u$ and $\gamma$ in Theorem \ref{theorem:convergence-bounded-error}, if $\epsilon > u$ the iterates of I-CBPG (Algorithms \ref{algo.main}) achieve $F(x^k) - F^*\leq\epsilon$ for $k\geq K$ where
\begin{equation}
K = 1+\left\lceil\max \left\{\frac{2}{\log 2}\cdot\log \frac{F(x^0)-F^*}{\epsilon},  \frac{4\gamma(F(x^0)-F^*-u)}{(\epsilon-u)(F(x^0)-F^*+3u)}   \right\} \right\rceil
\end{equation}

\end{corollary}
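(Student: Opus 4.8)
The plan is to translate the theorem's convergence bound into a concrete iteration count by inverting the two terms inside the maximum from Theorem \ref{theorem:convergence-bounded-error}. Write $A_0 := F(x^0)-F^*$ for brevity. Since $\epsilon > u$, the additive floor $u$ in the bound \eqref{eqn:convergence-bounded-error} does not obstruct reaching accuracy $\epsilon$, so it suffices to force each of the two competing expressions below $\epsilon$ and take the larger of the two required indices.

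First I would handle the exponential term. I want
\[
\left(\tfrac{1}{2}\right)^{(k-1)/2} A_0 \le \epsilon,
\]
which, upon taking logarithms, rearranges to $(k-1)/2 \ge \log_2(A_0/\epsilon)$, i.e. $k-1 \ge \frac{2}{\log 2}\log\frac{A_0}{\epsilon}$. This accounts for the first entry in the maximum defining $K$. Second I would handle the rational term: I want
\[
\frac{4\gamma(A_0-u)}{(k-1)(A_0+3u)} + u \le \epsilon,
\]
which is equivalent to $\frac{4\gamma(A_0-u)}{(k-1)(A_0+3u)} \le \epsilon - u$. Because $\epsilon > u$ the right-hand side is strictly positive, so I may divide and solve for $k-1$, yielding
\[
k-1 \ge \frac{4\gamma(A_0-u)}{(\epsilon-u)(A_0+3u)},
\]
which is exactly the second entry in the maximum. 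Taking $k-1$ to exceed the larger of the two lower bounds guarantees that the maximum in \eqref{eqn:convergence-bounded-error} is at most $\epsilon$; applying the ceiling to pass to an integer and adding the $1$ back recovers precisely the stated value of $K$.

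The only genuine subtlety I anticipate is justifying the sign conditions that make the second inversion legitimate: one needs $A_0 - u \ge 0$ so that the numerator is nonnegative and the inequality direction is preserved upon clearing the denominator, and one needs $\epsilon - u > 0$, which is supplied by hypothesis. If $A_0 \le u$ then the rational term is already at most $u < \epsilon$ for every $k$, so that branch of the maximum is vacuously controlled and the bound holds trivially; I would note this edge case in passing rather than belabor it. Everything else is routine algebraic rearrangement, so I expect no real obstacle beyond bookkeeping the ceiling and the index shift carefully.
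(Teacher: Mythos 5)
Your proposal is correct and follows the same route as the paper, which simply asserts that $K$ is the smallest $k\geq 2$ making the right-hand side of \eqref{eqn:convergence-bounded-error} at most $\epsilon$; you have just carried out the two inversions (logarithmic for the geometric term, algebraic for the rational term) explicitly, including the correct observation that $\epsilon>u$ is what licenses dividing by $\epsilon-u$. The edge case $F(x^0)-F^*\leq u$ you flag is handled exactly as you suggest and requires no further elaboration.
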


\begin{proof}
Clearly, the expression for $K$ is the smallest $k\geq 2$ ensuring the right-hand side of \eqref{eqn:convergence-bounded-error} from Theorem \ref{theorem:convergence-bounded-error} is less than or equal $\epsilon$.
\end{proof}

Below we present the convergence rate when the error tolerance sequence $\{\delta_k\}_{k\geq 1}$ decreases at the sublinear rate $\mathcal{O}(1/k^2)$. It is appropriate to reiterate that we are not aware of any works on inexact coordinate descent type methods that establish a rate of decrease on the error sequence that preserves standard convergence rates with exception to the $\ell_1$-norm specialized methods in \cite{Hua12}. Section \ref{sec:numerical}'s numerical experiments strikingly illustrate the benefits of such a decreasing error tolerance sequence and  imply the potential for immense computational savings by permitting higher error tolerances during early iterations.  

\begin{theorem}[Convergence of I-CBPG: Decreasing Error Case]
\label{theorem:convergence-decreasing-error}
Let $\{x^k\}_{k\geq 0}$ and $\{\delta_k\}_{k\geq 1}$ denote the sequences of iterates and error tolerances generated by I-CBPG (Algorithm \ref{algo.main}). If the error tolerance sequence dynamically decreases at the sublinear rate $\mathcal{O}(1/k^2)$ then for any $k \ge 4$, 
\begin{equation}\label{eqn:convergence-decreasing-error}
F(x^k) - F^* \leq \max\left\{\left(\frac{1}{2}\right)^{(k-1)/2}[F(x^0)-F^*],\frac{16\gamma}{k-3},\frac{8\sqrt{D\gamma}}{k-3}\right\}
\end{equation}
where
\[
\gamma = \frac{8p(L_f+L_{\max})^2R(x^0)^2}{L_{\min}} \quad \textrm{ and }\quad D =   \tilde{D}L_{\min}\left[3p +    \frac{L^2_{\max}}{4}\left(  \frac{R(x^0)\sqrt{2} + \sqrt{p\delta_1}}{(L_f + L_{\max})R(x^0)} \right)^2\right]
\]
and $\tilde{D}>0$ is a constant satisfying $\delta_k\leq\frac{\tilde{D}}{k^2}$  for all $k\geq 1$.
\end{theorem}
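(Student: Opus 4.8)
The plan is to read this off as a direct application of the technical recurrence Lemma~\ref{lemma:recurrence-technical}(ii), in exact parallel with how Theorem~\ref{theorem:convergence-bounded-error} invokes part~(i). I would first set $A_k := F(x^k)-F^*$ and put the recurrence of Lemma~\ref{lemma:prox:convex:sufficient-decrease} into the lemma's canonical form. Abbreviating
\[
C := L_{\min}\left[3p + \frac{L_{\max}^2}{4}\left(\frac{R(x^0)\sqrt{2}+\sqrt{p\delta_1}}{(L_f+L_{\max})R(x^0)}\right)^2\right],
\]
inequality~\eqref{eq:recur-main} is exactly $\tfrac{1}{\gamma}A_{k+1}^2 \le A_k - A_{k+1} + \Delta_{k+1}$ with $\gamma := 8p(L_f+L_{\max})^2R(x^0)^2/L_{\min}$ and $\Delta_{k+1} := C\,\delta_{k+1}$, since $A_k - A_{k+1} = F(x^k)-F(x^{k+1})$ and $1/\gamma$ is precisely the prefactor of $[F(x^{k+1})-F^*]^2$ in~\eqref{eq:recur-main}. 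This already pins down the constant $\gamma$ of the theorem.

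Next I would translate the decreasing-error hypothesis into the $\mathcal{O}(1/\ell^2)$ decay demanded by Lemma~\ref{lemma:recurrence-technical}(ii). Since there is $\tilde D>0$ with $\delta_\ell \le \tilde D/\ell^2$ for all $\ell \ge 1$, we get $\Delta_\ell = C\delta_\ell \le C\tilde D/\ell^2$, so the hypothesis holds with $D := C\tilde D$, which is verbatim the constant $D$ in the theorem statement. The remaining bookkeeping is to check the structural hypotheses of the lemma: $\{A_\ell\}$ is non-negative because $F^*=\min F$; $\{\Delta_\ell\}$ is non-negative and, since the algorithm enforces $\delta_{k+2}\in[0,\delta_{k+1}]$, non-increasing; and one verifies $\gamma \ge 1$.

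The one hypothesis that is not purely mechanical, and which I expect to be the main obstacle, is the requirement that $\{A_\ell\}=\{F(x^k)-F^*\}$ be non-increasing. Establishing this is precisely the role of the monotone decrease condition~\eqref{eq.mondec} imposed on the selection $T^{(i)}_\delta$: one must argue that no block update increases $F$, so that each full cycle satisfies $F(x^{k+1})\le F(x^k)$. Since \eqref{eq.mondec} directly controls only the smooth part $f$, care is needed here, and if exact per-cycle monotonicity is not immediate one would instead exploit the summability $\sum_k\delta_k<\infty$ afforded by the $\mathcal{O}(1/k^2)$ schedule to keep the iterates within the sublevel set whose radius $R(x^0)$ drives the constants. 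Once $\{F(x^k)\}$ is certified as a valid non-increasing input, the conclusion is immediate: Lemma~\ref{lemma:recurrence-technical}(ii) gives, for every $k\ge 4$,
\[
A_k \le \max\left\{\frac{16\gamma}{k-3},\ \frac{8\sqrt{D\gamma}}{k-3},\ \left(\tfrac{1}{2}\right)^{(k-1)/2}A_0\right\},
\]
which is exactly~\eqref{eqn:convergence-decreasing-error} upon substituting $A_0 = F(x^0)-F^*$.
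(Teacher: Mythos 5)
Your proposal matches the paper's proof essentially verbatim: the paper likewise sets $A_k=F(x^k)-F^*$, reads $\gamma$ off as the prefactor in \eqref{eq:recur-main}, takes $\Delta_\ell = C\delta_\ell \le C\tilde D/\ell^2 = D/\ell^2$, and invokes Lemma~\ref{lemma:recurrence-technical}(ii). The monotonicity caveat you raise about $\{F(x^k)\}$ is a fair one, but the paper's proof does not address it either --- it simply asserts the identification and applies the lemma.
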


\begin{proof}
As in the proof of Theorem \ref{theorem:convergence-bounded-error}, the result rests on appropriate identification of the sequence $\{A_\ell\}_{\ell\geq 0}$ and the constants $\gamma$, $D$, and $\lambda$ in Lemma \ref{lemma:recurrence-technical}(ii). The identification here is more straightforward than in the fixed error case. Clearly, a quick examination of \eqref{eq:recur-main} from Lemma \ref{lemma:prox:convex:sufficient-decrease} shows we should choose $\gamma$ as given in the theorem statement, and $\Delta_\ell=\frac{D}{\ell^2}$ to ensure
\[
\frac{1}{\gamma}A_{k+1}^2 \leq A_{k} - A_{k+1} +\frac{D}{k^2}
\]
Invoking Lemma \ref{lemma:recurrence-technical}, then, we achieve
\[
F(x^k) - F^*=A_k\leq\max\left\{\left(\frac{1}{2}\right)^{(k-1)/2}[F(x^0)-F^*],\frac{16\gamma}{k-3},\frac{8\sqrt{D\gamma}}{k-3}\right\}
\]
for $k\geq 4$.
\end{proof}

\begin{corollary}[Convergence of I-CBPG: Decreasing Error Case (Restated)]\label{cor:convergence-decreasing-error}
Under the same assumptions and definitions of $\gamma$ and $D$ in Theorem \ref{theorem:convergence-decreasing-error}, the iterates of I-CBPG (Algorithm \ref{algo.main}) achieve $F(x^k) - F^* \leq\epsilon$ for $k\geq K$  where
\[
K =  \left\lceil \max \left\{1 + \frac{2}{\log 2}\cdot\log\left( \frac{F(x^0)-F^{*}}{\epsilon}\right),3+\frac{16\gamma}{\epsilon},  3 + \frac{8\sqrt{D\gamma}}{\epsilon}\right\} \right\rceil
\]
\end{corollary}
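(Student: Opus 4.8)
The plan is to read $K$ directly off the three-term maximum bound \eqref{eqn:convergence-decreasing-error} supplied by Theorem \ref{theorem:convergence-decreasing-error}. That theorem certifies, for every $k\geq 4$, that $F(x^k)-F^*$ is at most the maximum of three explicit, strictly decreasing functions of $k$. Consequently, to guarantee $F(x^k)-F^*\leq\epsilon$ it suffices to drive each of the three terms individually below $\epsilon$, and the global iteration threshold is simply the largest of the three per-term thresholds. This is the exact analogue of the reasoning used for the fixed-error restatement in Corollary \ref{cor:convergence-bounded-error}, so the argument is structurally identical; only the functional forms being inverted differ.

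The core of the proof is then three routine inversions. First I would handle the geometric term: requiring $(1/2)^{(k-1)/2}[F(x^0)-F^*]\leq\epsilon$ and taking natural logarithms gives $\tfrac{k-1}{2}\log 2\geq\log\!\big(\tfrac{F(x^0)-F^*}{\epsilon}\big)$, i.e. $k\geq 1+\tfrac{2}{\log 2}\log\!\big(\tfrac{F(x^0)-F^*}{\epsilon}\big)$, which is the first entry of the maximum defining $K$. Next I would invert the two algebraic terms: $\tfrac{16\gamma}{k-3}\leq\epsilon$ is equivalent to $k\geq 3+\tfrac{16\gamma}{\epsilon}$, and $\tfrac{8\sqrt{D\gamma}}{k-3}\leq\epsilon$ is equivalent to $k\geq 3+\tfrac{8\sqrt{D\gamma}}{\epsilon}$, producing the remaining two entries. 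Taking the maximum of the three thresholds and applying the ceiling (to respect the integrality of the iteration index) yields precisely the stated expression for $K$.

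The only point deserving a sentence of care is the domain of validity, since the bound \eqref{eqn:convergence-decreasing-error} is asserted only for $k\geq 4$; I would note that the second and third thresholds have the form $3+(\text{nonnegative quantity})$ with $\gamma\geq 1$, so in the nontrivial regime $K$ automatically lands in the range where Theorem \ref{theorem:convergence-decreasing-error} applies. Beyond this bookkeeping remark there is no genuine obstacle: the entire content is the inversion of explicit monotone decay bounds, so I would keep the write-up short and simply match each inverted inequality to its corresponding term in the $K$ formula.
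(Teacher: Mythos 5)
Your proposal is correct and is exactly the argument the paper intends: the paper's own proof is the one-line observation that $K$ is the smallest index making the right-hand side of \eqref{eqn:convergence-decreasing-error} at most $\epsilon$, and your three term-by-term inversions (plus the remark about the $k\geq 4$ validity range) simply spell out that same computation.
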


\begin{proof}
Clearly, the expression for $K$ is the smallest $k\geq 2$ ensuring the right-hand side of \eqref{eqn:convergence-decreasing-error} from Theorem \ref{theorem:convergence-decreasing-error} is less than or equal to $\epsilon$.
\end{proof}


%
%


\section{Numerical Experiments}\label{sec:numerical}
In this section we present numerical experiments that demonstrate I-CBPG's performance capabilities.\footnote{Data and related code for these experiments will be made available upon reasonable request.} 
 We selected a common testbed for  such experiments, the LASSO problem,
\begin{equation}\label{num:problem}
\min_{x\in \mathbb{R}^n} \frac{1}{2}\| Ax - b\|_2^2 + \lambda \| x\|_1
\end{equation}
with $A \in \mathbb{R}^{m \times n}$, $b \in \mathbb{R}^m$, $\lambda > 0$. This problem fits within the template of \eqref{eq.problem} by recognizing $f(x) = \frac{1}{2}\| Ax - b\|_2^2 $ and $\Psi_i(U_i^Tx) = \lambda \|  U_i^T x  \|_1$ for $i = 1, \ldots, p$ with $(U_1,\ldots,U_p)=I_n$.  

Throughout, we follow the setup of Section 8.2 of  \cite{Richtarik14} and explore two cases with $N = 10^5$: in the first, the matrix $A$ is ``tall" with size $N \times 0.5N$; in the second,  $A$ is ``wide" with size $N \times 2N$. In both settings, $A$ is a randomly generated sparse matrix with approximately 20 nonzero entries per column. The nonzero entries are generated according to the uniform distribution on $[0,1]$. We subdivide both $A$ matrices into $p = 10$ blocks $A_i$ of equal size, and add to each block an identity matrix padded with zeros to guarantee that each $A_i$ is of full rank. With this, we have that the block smoothness condition \eqref{eq:smooth:block} is satisfied for $B_i  =A_i^TA_i$ and $L_i$ =1. We set $\lambda = 0.1$ when $A$ is $N \times 0.5N$ and $\lambda  = 0.01$ when $\lambda = N \times 2N$. In both cases, we generated $b$ via the uniform distribution on the appropriate unit sphere. 

\begin{figure}[htb!]
\begin{subfigure}{.5\textwidth}
  \centering
  \includegraphics[width=.8\linewidth]{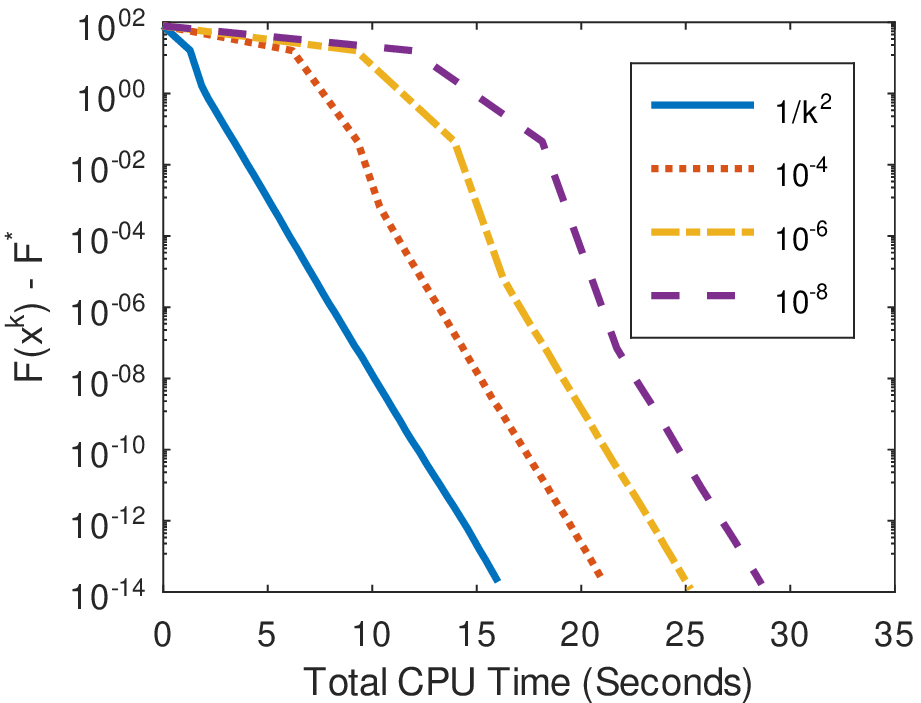}
  \caption{$F(x^k)-F^*$ vs. Total CPU Time, by $\delta_k$ Rule}
  \label{fig:tallmatrix_sub1}
\end{subfigure}%
\begin{subfigure}{.5\textwidth}
  \centering
  \includegraphics[width=.8\linewidth]{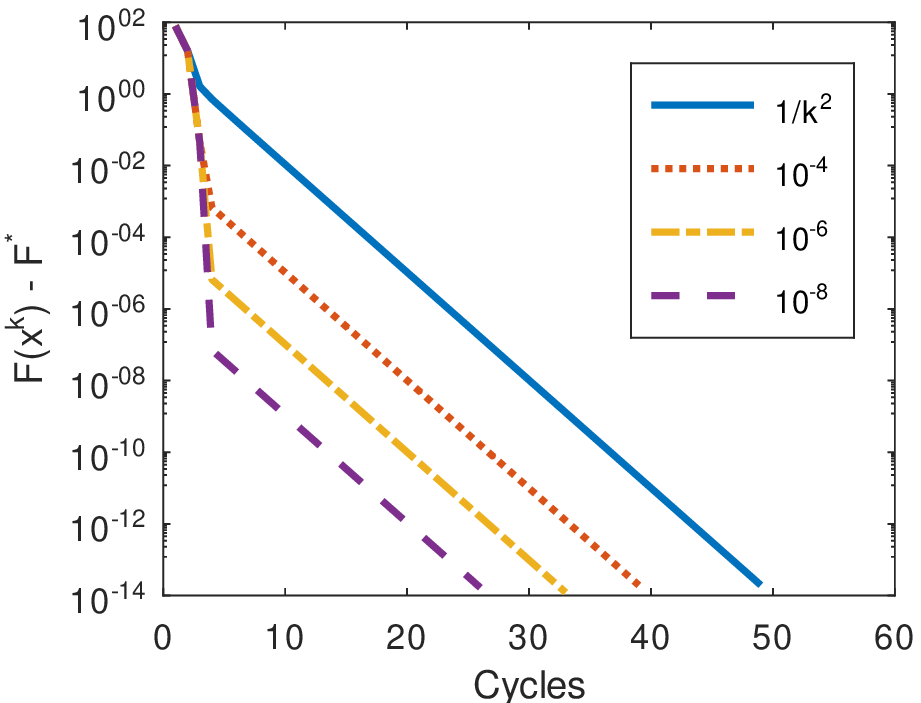}
   \caption{$F(x^k)-F^*$ vs. Cycles, by $\delta_k$ Rule}
  \label{fig:tallmatrix_sub2}
\end{subfigure}%

\caption{ I-CBPG performance graphs for LASSO problem \eqref{num:problem} with ``tall" $A$ (size $N \times 0.5N$, $N = 10^5$)  by $\delta_k$ (error tolerance) rule.}
\label{fig:tallmatrix}
\end{figure}

 \begin{table}[htb!]
  \begin{center}
\scalebox{1}{
  \begin{tabular}{ | c | c | c | c | c | c |  }
  \hline
  \multicolumn{2}{| c | }{Error Tolerance} & $\delta_k = 1/k^2$ & $\delta_k = 10^{-4}$ & $\delta_k = 10^{-6}$ & $\delta_k = 10^{-8}$ \\
  \hline \multicolumn{6}{|c|}{Results at Convergence} 
\\ \hline\multicolumn{2}{|c|}{Total Cycles} & 48  & 38  & 32  & 25 \\  
\multicolumn{2}{|c|}{Total Time}   & 16.0132  & 21.0617  & 25.2328  & 28.6197 \\  
 \hline \multicolumn{6}{|c|}{Results by Cycle Number $k$} \\ \hline01 &  Cycle Time                    & 1.28783  & 6.18858  & 9.23193  & 11.9033 \\  
   & $F(x^k)-F^*$                   & 16.241  & 15.961  & 15.961  & 15.961 \\  
 \hline 05 &  Cycle Time                    & 0.304581  & 0.294988  & 0.295898  & 0.293403 \\  
   & $F(x^k)-F^*$                   & 0.174303  & 0.00016645  & 1.64993e-06  & 1.76299e-08 \\  
 \hline 15 &  Cycle Time                    & 0.286501  & 0.298022  & 0.308174  & 0.299533 \\  
   & $F(x^k)-F^*$                   & 0.00016883  & 1.62136e-07  & 1.61117e-09  & 1.72165e-11 \\  
 \hline 25 &  Cycle Time                    & 0.306331  & 0.32485  & 0.310867  & 0.305136 \\  
   & $F(x^k)-F^*$                   & 1.64872e-07  & 1.58336e-10  & 1.5733e-12  & 1.66533e-14 \\  
 \hline 35 &  Cycle Time                    & 0.321347  & 0.315409  &    &   \\  
   & $F(x^k)-F^*$                   & 1.61007e-10  & 1.54543e-13  &    &   \\  
 \hline 45 &  Cycle Time                    & 0.288697  &    &    &   \\  
   & $F(x^k)-F^*$                   & 1.57097e-13  &    &    &   \\  
 \hline 
   \hline
 \end{tabular}}
  \end{center}
  \caption{Total cycles, total elapsed CPU time, CPU time per cycle, and suboptimality gap by cycle as a function of $\delta_k$  (error tolerance) rule for ``tall" $A$ (size $N \times 0.5N$, $N = 10^5$).}
  \label{tab:tallmatrix} 
\end{table}

At each step of the algorithm, to compute our update to the $i^{th}$ block, we find $T_{\delta_{k+1}}^{\left( i\right)}\left( x^{k,i-1} \right) $ by calculating a $\delta_{k+1}$-approximate solution (that also satisfies the monotonic decrease condition \eqref{eq.mondec}) to the smaller-dimensional (likely far smaller) problem
\begin{equation}\label{num:subproblasso}
\arg \min_{y \in \mathbb{R}^{n_i}}  \frac{1}{2} \| A_i y - \tilde{b}^{k,i}\|_2^2 + \lambda \| y\|_1
\end{equation}
where $\tilde{b}^{k,i} := b-Ax^{k,i-1}+A_ix^{k,i-1}_i$. To do so, we use the box-constrained gradient projection algorithm of \cite{Broughton11} to approximately solve \eqref{num:subproblasso}. We terminate the box-constrained gradient projection algorithm when the duality gap for \eqref{num:subproblasso} descends below $\delta_{k+1}$ and the monotonic decrease condition is satisfied.

We explore the effect of different error tolerance levels on I-CBPG runtime performance and cycle counts by comparing algorithm behavior under  a dynamic rule that sets $\delta_k = 1/k^2$ with three different constant rules that fix $\delta_k$  at $10^{-4}$. $10^{-6}$, or $10^{-8}$ for all cycles, respectively. Plots of the difference $F\left(x^k\right) - F^*$ against both CPU time and the number of iterations $k$ are presented in Figure \ref{fig:tallmatrix} and Table \ref{tab:tallmatrix} for the first case, where $A$ is $N \times 0.5N$, and in Figure \ref{fig:widematrix} and Table \ref{tab:widematrix} for the second case, where $A$ is $N \times 2N$. We performed a number of simulations of both types, but limit our discussion to a single instance of each for definiteness. Owing to the random data generation procedure, we did observe some minor variation in the quantitative values of the various ratios discussed below across different  problem instances, but the qualitative relationships between I-CBPG performance under different error tolerance sequences have been stable across all cases we examined. 

Among the fixed error tolerance regimes ($\delta_k$ constant), it is apparent for both types of $A$ matrix that larger error tolerances (as measured by a higher value of $\delta_k$) translate to achieving a given level of accuracy in a shorter interval of CPU time than is possible with a more stringent (lower) value of $\delta_k$, even though the algorithm runs through more cycles in total to achieve a given suboptimality gap when the error is larger. The property of larger fixed values of $\delta_k$ translating to shorter CPU time costs echoes the results of \cite{Richtarik16}, while our results differ by showing a greater spread across error tolerance regimes in the number of cycles needed to achieve convergence. Examining the performance of the  dynamic rule $\delta_k = 1/k^2$, we see a continuation of this trend for both cases, where the dynamic rule uses the least total CPU time of all, but needs the most cycles to achieve convergence. 

Further study of Figure \ref{fig:tallmatrix} and Table \ref{tab:tallmatrix} explains this apparent discrepancy while showcasing the power and performance advantage of inexact computation. While cycle times display very little variation across different $\delta$ regimes from cycle $k = 5$ onward, a greater degree of error tolerance in early cycles translates to marked improvements in speed. In particular, for the first LASSO example, one sees that more stringent error tolerances come at significantly higher CPU time costs for early iterates. Conversely, more permissive error tolerance rules for early iterates achieve the same progress in a fraction of the time.  These time savings carry through to convergence, as shown in Table \ref{tab:tallmatrix}. In particular, the dynamic error tolerance regime $\delta_k = 1/k^2$ achieves converges in $24\%$ less time than the constant rule $\delta_k = 10^{-4}$, which in turn achieves convergence in $27\%$ less time than the constant rule $\delta_k = 10^{-8}$, so that the dynamic rule $\delta_k = 1/k^2$ is sufficiently fast relative to  the constant rule $\delta_k = 10^{-8}$ that it achieves convergence in $45\%$ less time.

The same trend characterizes the second case, as shown in Figure \ref{fig:widematrix} and Table \ref{tab:widematrix}, with some noteworthy differences. Specifically, total solution time is markedly longer for all $\delta_k $ rules, but the CPU time savings achieved by applying I-CBPG  with relatively more permissive $\delta_k$ rules are even more pronounced. In particular, from Table \ref{tab:widematrix}, we calculate a total CPU time savings of slightly more than 50\% for the $\delta_k = 1/k^2$ rule relative to the $\delta_k = 10^{-4}$ rule. Similarly, we see an even greater total CPU time savings of over 56\% for the more lenient $\delta_k = 10^{-4}$ rule relative to the stricter $\delta_k = 10^{-8}$ rule. Most dramatic of all, we find  a CPU time savings of more than 79\% for the $\delta_k = 1/k^2$ rule relative to the constant rule $\delta_k  =10^{-8}$. Further comparison of Table \ref{tab:widematrix} with Table \ref{fig:tallmatrix} helps explain the additional performance benefit offered by inexact computation in this case by showing that nontrivial differences in cycle times across error tolerances persist for longer than in the first case, even after the very large differences seen in early cycles have moderated. 

\begin{figure}[htb!]
\begin{subfigure}{.5\textwidth}
  \centering
  \includegraphics[width=0.8\linewidth]{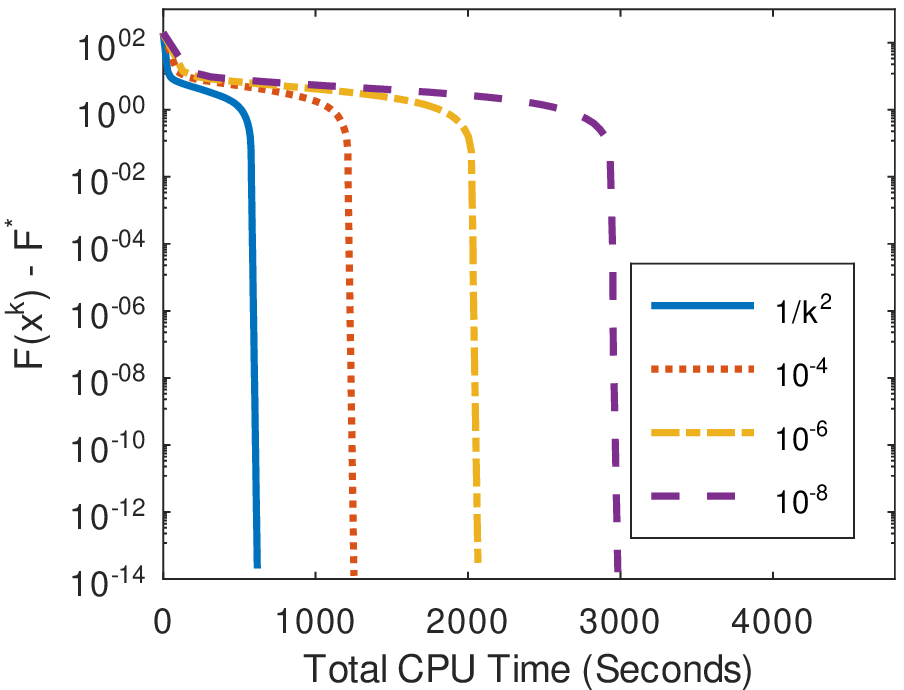}
   \caption{$F(x^k)-F^*$ vs. Total CPU Time, by $\delta_k$ Rule}
  \label{fig:widematrix_sub1}
\end{subfigure}%
\begin{subfigure}{.5\textwidth}
  \centering
  \includegraphics[width=0.8\linewidth]{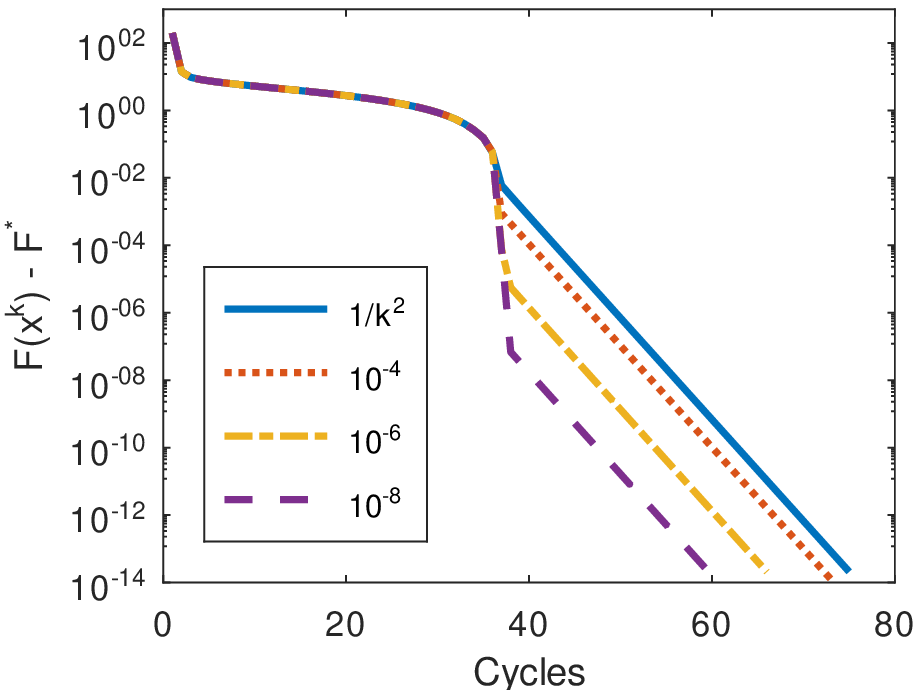}
   \caption{$F(x^k)-F^*$ vs Cycles, by $\delta_k$ Rule}
  \label{fig:widematrix_sub2}
\end{subfigure}%

\caption{ I-CBPG performance graphs for LASSO problem \eqref{num:problem} with ``wide" $A$ (size $N \times 2N$, $N = 10^5$) by $\delta_k$ (error tolerance) rule.}
\label{fig:widematrix}
\end{figure}

 \begin{table}[htb!]
  \begin{center}
\scalebox{1}{
  \begin{tabular}{ | c | c | c | c | c | c |  }
  \hline
  \multicolumn{2}{| c | }{Error Tolerance} & $\delta_k = 1/k^2$ & $\delta_k = 10^{-4}$ & $\delta_k = 10^{-6}$ & $\delta_k = 10^{-8}$ \\
  \hline \multicolumn{6}{|c|}{Results at Convergence} 
\\ \hline\multicolumn{2}{|c|}{Total Cycles} & 74  & 72  & 65  & 59 \\  
\multicolumn{2}{|c|}{Total Time}   & 617.614  & 1251.38  & 2065.55  & 2982.54 \\  
 \hline \multicolumn{6}{|c|}{Results by Cycle Number $k$} \\ \hline01 &  Cycle Time                    & 29.6646  & 89.9671  & 129.238  & 164.616 \\  
   & $F(x^k)-F^*$                   & 14.0733  & 14.0743  & 14.0743  & 14.0743 \\  
 \hline 05 &  Cycle Time                    & 17.2434  & 51.2102  & 79.3759  & 99.8509 \\  
   & $F(x^k)-F^*$                   & 6.9191  & 6.91677  & 6.91677  & 6.91677 \\  
 \hline 15 &  Cycle Time                    & 19.5706  & 34.3488  & 61.0178  & 92.177 \\  
   & $F(x^k)-F^*$                   & 3.62617  & 3.62753  & 3.62753  & 3.62753 \\  
 \hline 25 &  Cycle Time                    & 16.5504  & 25.798  & 47.0633  & 67.9062 \\  
   & $F(x^k)-F^*$                   & 1.57601  & 1.57736  & 1.57737  & 1.57737 \\  
 \hline 35 &  Cycle Time                    & 5.38051  & 8.56292  & 18.2444  & 28.9568 \\  
   & $F(x^k)-F^*$                   & 0.0565171  & 0.0585538  & 0.0586275  & 0.0586271 \\  
 \hline 45 &  Cycle Time                    & 0.999437  & 0.970537  & 1.0669  & 1.30292 \\  
   & $F(x^k)-F^*$                   & 1.12488e-05  & 1.68518e-06  & 2.17146e-08  & 2.6541e-10 \\  
 \hline 55 &  Cycle Time                    & 1.04114  & 0.962187  & 1.1745  & 1.23398 \\  
   & $F(x^k)-F^*$                   & 1.09842e-08  & 1.64561e-09  & 2.12048e-11  & 2.59183e-13 \\  
 \hline 65 &  Cycle Time                    & 0.949509  & 0.965906  & 1.14957  &   \\  
   & $F(x^k)-F^*$                   & 1.07268e-11  & 1.60704e-12  & 2.07083e-14  &   \\  
 \hline 74 &  Cycle Time                    & 1.01878  &    &    &   \\  
   & $F(x^k)-F^*$                   & 2.09511e-14  &    &    &   \\  
 \hline 
   \hline
 \end{tabular}}
  \end{center}
  \caption{Total cycles, total elapsed CPU time, CPU time per cycle, and suboptimality gap by cycle as a function of  $\delta_k$  (error tolerance) rule for ``wide" $A$ (size $N \times 2N$, $N = 10^5$).}
  \label{tab:widematrix} 
\end{table} 

\newpage

\section{Conclusion}

In this paper, we introduced inexactly computed gradients and proximal maps into the Cyclic Block Proximal Gradient scheme resulting in our I-CBPG algorithm. Our convergence analysis covers both dynamically decreasing and fixed error tolerances. Our numerical experiments imply that dynamically decreasing error tolerances may greatly reduce the CPU time cost of I-CBPG's early iterations. In the course of our analysis, we explored a unified framework for analyzing inexact computation via inexactly computed pre-conditioned proximal maps. This framework's tools enabled us to show how the inexact proximal map subsumes the inexact computation of both gradients and proximal maps as well as a notion of inexact computation provided in \cite{Rockafellar76}.

\clearpage

\bibliographystyle{plain}
\bibliography{mybibnew}

\appendix
\section{Proof of Lemma \ref{lemma:recurrence-technical}}

\begin{proof} Fix $k\geq  2$. We begin by dividing both sides of \eqref{eq:recur-ineq} by $A_\ell A_{\ell+1}$,
\[
\frac{1}{\gamma}\frac{A_{\ell+1}}{A_\ell}\leq\frac{1}{A_{\ell+1}}-\frac{1}{A_\ell}+\frac{\Delta_{\ell+1}}{A_\ell A_{\ell+1}},
\]
rearranging and using monotonicity of $\{A_\ell\}_{\ell\geq 0}$, simplify to
\[
\frac{1}{A_{\ell+1}}-\frac{1}{A_\ell}\geq\frac{1}{\gamma}\frac{A_{\ell+1}}{A_\ell}-\frac{\Delta_{\ell+1}}{A_\ell A_{\ell+1}}\geq\frac{1}{\gamma}\frac{A_{\ell+1}}{A_\ell}+-\frac{\Delta_{\ell+1}}{A_\ell A_{\ell+1}}.
\]
This rearrangement foreshadows the important roles of $A_{\ell+1}/A_\ell$ and $\Delta_{\ell+1}/(A_\ell A_{\ell+1})$. We consider two cases, divided according to the typical size of the ratio  $A_{\ell+1}/A_\ell$ for $\ell + 1 \le k$. In the second case, when the values of $A_\ell$ fall at what one may consider a relatively slow rate over this range, we consider three subcases based on the behavior of $\{\Delta_\ell\}_{\ell\geq1}$ and the typical values of $\frac{\Delta_{\ell+1}}{A_\ell A_{\ell+1}}$.
\begin{enumerate}[(i)]
\item For at least $\lfloor k/2\rfloor$ values of $0\leq \ell\leq k-1$, we have $A_{\ell+1}/A_\ell\leq 1/2$.
\item For at least $\lfloor k/2\rfloor$ values of $0\leq \ell\leq k-1$, we have $1/2  < A_{\ell+1}/A_\ell \leq1$.
In this case, we consider three subcases based on the values of $\Delta_{\ell+1}/(A_\ell A_{\ell+1})$ and the sequence $\{\Delta_\ell\}_{\ell\geq 1}$.
\end{enumerate}

\noindent\textit{Case 1: For at least $\lfloor k/2\rfloor$ values of $0\leq \ell\leq k-1$, $\frac{A_{\ell+1}}{A_\ell}\leq\frac{1}{2}$.}\\

\noindent This is the easy case. First, assume that $k$ is even. Then we have that $A_{\ell+1}\leq\frac{1}{2}A_\ell$ for at least $k/2$ values of $0\leq \ell\leq k-1$ so 
\[
A_k\leq  \left(\frac{1}{2}\right)^{k/2}A_0,
\]
since the $A_\ell$ terms are decreasing. If $k>2$ is odd, then $k-1$ is even, so by the same logic 
\[
A_k\leq  \left(\frac{1}{2}\right)^{(k-1)/2}A_0.
\]

\noindent\textit{Case 2: For at least $\lfloor k/2\rfloor$ values of $0\leq \ell\leq k-1$, $\frac{1}{2} <  \frac{A_{\ell+1}}{A_\ell} \leq 1$.}\\

\noindent We examine the following three subcases in turn:
\begin{enumerate}[(i)]
	\item $\Delta_{\ell} = \Delta\geq 0 $ for all $\ell$.
	\item The sequence $\{\Delta_\ell\}_{\ell\geq1}$ shrinks at the sublinear rate $\mathcal{O}(1/\ell^2)$ and for at least $\lfloor k/4\rfloor$ of the values for which $\frac{1}{2}< \frac{A_{\ell+1}}{A_\ell}\leq 1$ it also holds that $\frac{1}{4\gamma}>\frac{\Delta_{\ell+1}}{A_\ell A_{\ell+1}}$.
	\item The sequence $\{\Delta_\ell\}_{\ell\geq1}$ shrinks at the sublinear rate $\mathcal{O}(1/\ell^2)$ and for at least $\lfloor k/4\rfloor$ of the values for which $\frac{1}{2}<\frac{A_{\ell+1}}{A_\ell}\leq 1$ it also holds that $\frac{1}{4\gamma} \leq \frac{\Delta_{\ell+1}}{A_\ell A_{\ell+1}}$.
\end{enumerate}

\noindent\textit{Case 2, Subcase i: $\Delta_\ell = \Delta\geq 0$ for all $\ell$}.\\

Assume for now that $k$ is even. Define $u=\sqrt{\Delta\gamma}$, and let $\tilde{A}_{\ell}=A_{\ell}-u$.  Then the recurrence   \eqref{eq:recur-ineq} implies that $ \frac{1}{\gamma}A_{\ell+1}^2\leq A_\ell-A_{\ell+1}+\Delta_{\ell+1} $, which we may express as 
\[
\frac{1}{\gamma}(\tilde{A}_{\ell+1}+u)^2=\frac{1}{\gamma}A_{\ell+1}^2\leq  A_\ell-A_{\ell+1}+\Delta=\tilde{A}_\ell-\tilde{A}_{\ell+1}+\Delta
\]
Expanding the square on the left, using the definition of $u$, and rearranging
\[
\frac{1}{\gamma}\tilde{A}_{\ell+1}^2\leq \tilde{A}_\ell-\left(1+\frac{2u}{\gamma}\right)\tilde{A}_{\ell+1}
\]
If $\tilde{A}_{k} \leq 0$, the result is immediate, so suppose that $\tilde{A}_{k} > 0$, from which it follows that earlier terms $\tilde{A}_{\ell}$ are also positive.  Then, for any $\ell$ with $0 \le \ell \le k-1$, we may divide the recurrence inequality by the product $\tilde{A}_{\ell+1}\tilde{A}_\ell$ to obtain
\[
 \frac{1}{\tilde{A}_{\ell+1}} - \left( 1 + \frac{2u}{\gamma}\right)\frac{1}{\tilde{A}_\ell} \ge \frac{1}{\gamma}\frac{\tilde{A}_{\ell+1}}{\tilde{A}_\ell} 
\]
Now, by hypothesis, for at least $k/2$ indices in the range $0 \leq \ell \leq k -1$
\[
 \frac{1}{\tilde{A}_{\ell+1}} - \frac{1}{\tilde{A}_\ell} \geq \frac{1}{\gamma}\frac{\tilde{A}_{\ell+1}}{\tilde{A}_\ell}  +  \frac{2u}{\gamma}  \frac{1}{\tilde{A}_\ell}  \geq \frac{1}{\gamma}\frac{1 }{2}  +  \frac{2u}{\gamma}  \frac{1}{\tilde{A}_0} 
\]
Iterating backward, one obtains 
\[
 \frac{1}{\tilde{A}_k} \geq \frac{1}{\tilde{A}_{k}} - \frac{1}{\tilde{A}_0}  \geq \frac{k}{2} \left( \frac{1}{2\gamma}  +  \frac{2u}{\gamma}  \frac{1}{\tilde{A}_0} \right)
\]
 which gives $\tilde{A}_k \leq 4\gamma \tilde{A}_0/[k (\tilde{A}_0+4u)]$. The result follows from noting that $k-1$ is even if $k$ is odd, so we may replace $k$ with $k-1$ above to obtain a generic bound. \\



\noindent\textit{Case 2, Subcase ii: The sequence $\{\Delta_\ell\}_{\ell\geq1}$ shrinks at the sublinear rate $\mathcal{O}(1/\ell^2)$ and for at least $\lfloor k/4\rfloor$ of the values for which  $\frac{1}{2} <  \frac{A_{\ell+1}}{A_\ell} \leq 1$, it also holds that$\frac{\Delta_{\ell+1}}{A_\ell A_{\ell+1}}<  \frac{1}{4\gamma}$.}\\

Our reasoning follows the same idea as when $\Delta_\ell=\Delta\geq 0$ for all $\ell\geq 1$ (Case 2, Subcase i). First, assume that $k$ is divisible by $4$. We have for $k/4$ values of $0\leq\ell\leq k-1$ that 
\[
\frac{1}{A_{\ell+1}}-\frac{1}{A_\ell}\geq\frac{1}{2\gamma}-\frac{\Delta_{\ell+1}}{A_\ell A_{\ell+1}}\geq\frac{1}{2\gamma}-\frac{1}{4\gamma}=\frac{1}{4\gamma}
\]
This inequality iterated backward, plus monotonicity and non-negativity of the sequence $\{A_\ell\}_{\ell\geq 0}$, implies that
\[
\frac{1}{A_k}\geq\frac{1}{A_k}-\frac{1}{A_0}\geq \frac{k}{4}\left[\frac{1}{4\gamma}\right]=\frac{k}{16\gamma }
\]
Rearranging, we have that $A_k \leq 16\gamma / k$. If $k > 4$ is not divisible by $4$, then $k-1$, $k-2$, or $k-3$ must be, so in the worst case $A_k \leq 16\gamma/(k-3)$.\\ 

\noindent\textit{Case 2, Subcase iii: The sequence $\{\Delta_\ell\}_{\ell\geq1}$ shrinks at the sublinear rate $\mathcal{O}(1/\ell^2)$ and for at least $\lfloor k/4\rfloor$ of the values for which $\frac{1}{2} <  \frac{A_{\ell+1}}{A_\ell} \leq 1$, it also holds that $\frac{\Delta_{\ell+1}}{A_\ell A_{\ell+1}}\geq\frac{1}{4\gamma}$.}\\

First, suppose $k$ is divisible by $4$. Let $\ell^*$ denote the largest  $\ell\in\{0,\ldots,k-1\}$ for which $\frac{\Delta_{\ell+1}}{A_\ell A_{\ell+1}}\geq\frac{1}{4\gamma}$ holds. By hypothesis, $\ell^*$ must be at least as big as $\frac{k}{4} - 1$, and $\Delta^2_\ell \leq D/\ell^2$, so 
\[
\frac{1}{4\gamma}\cdot A_k^2\leq \frac{1}{4\gamma}\cdot A_k A_{k-1}\leq \frac{1}{4\gamma}\cdot A_{\ell^*+1} A_{\ell^*}\leq \Delta_{\ell^*+1}\leq\Delta_{k/4}\leq\frac{D}{(k/4)^2}, 
\]
Dividing by $1/4\gamma$ and taking square roots, we have $A_{k}\leq\frac{8\sqrt{\gamma D}}{k}$. If $k>4$ is not divisible by $4$, then one of $k-1$, $k-2$, or $k-3$ are, so at worst $A_k\leq  \frac{8\sqrt{\gamma D}}{k-3}$.\\

Having completed our analysis, we may now combine the results from Case 1 with the appropriate Subcase(s) of Case 2 to establish the results in Lemma \ref{lemma:recurrence-technical}.

\end{proof}
\end{document}